\newenvironment{spm}
{\bigl(\begin{smallmatrix}}
	{\end{smallmatrix}\bigr)}
\newenvironment{nalign}
{\begin{equation}\begin{aligned}}
		{\end{aligned}\end{equation}\ignorespacesafterend}
\newtheorem{theorem}{Theorem}
\newtheorem{prop}[theorem]{Proposition}
\numberwithin{equation}{section}
\numberwithin{theorem}{section}
\numberwithin{figure}{section}
\theoremstyle{remark}
\newtheorem{rem}[theorem]{Remark}
\theoremstyle{remark}
\theoremstyle{theorem}
\newtheorem{assumption}[theorem]{Assumption}
\newcommand\R{\mathbb{R}}
\newcommand{\oU}{\overline{U}}
\newcommand{\oV}{\overline{V}}
\newcommand{\rev}[1]{{\color{black} #1}}
\newcommand{\UV}{( U, V)}
\newcommand{\UVx}{\big( U(x), V(x)\big)}
\newcommand\V{\mathcal{V}}
\newcommand{\oo}{\overline{\Omega}}
\renewcommand\L{\mathcal{L}}
\newcommand{\W}[2]
{{
		W^{#1,#2}_
		{
			\ifthenelse
			{
				\equal{#1}{2}
			}
			{
				\nu
			}
			{
			}
		}(\Omega)
}}
\newcommand{\Li}{{L^\infty(\Omega)}}
\newcommand{\Lp}[1]
{{
		L^
		{
			\ifthenelse
			{
				\isempty{#1}
			}
			{
				p
			}
			{
				#1
			}
		}(\Omega)
}}
\definecolor{lime}{HTML}{A6CE39}
\DeclareRobustCommand{\orcidicon}{%
	\begin{tikzpicture}
		\draw[lime, fill=lime] (0,0) 
		circle [radius=0.16] 
		node[white] {{\fontfamily{qag}\selectfont \tiny ID}};
		\draw[white, fill=white] (-0.0625,0.095) 
		circle [radius=0.007];
	\end{tikzpicture}
	\hspace{-2mm}
}
\definecolor{lime}{HTML}{A6CE39}
\DeclareRobustCommand{\orcidicon}{%
	\begin{tikzpicture}
		\draw[lime, fill=lime] (0,0) 
		circle [radius=0.16] 
		node[white] {{\fontfamily{qag}\selectfont \tiny ID}};
		\draw[white, fill=white] (-0.0625,0.095) 
		circle [radius=0.007];
	\end{tikzpicture}
	\hspace{-2mm}
}
\begin{document} 

\title[Reaction-diffusion-ODE systems]{
Discontinuous stationary solutions \\ to certain reaction-diffusion systems}

\author[S. Cygan]{Szymon Cygan \href{https://orcid.org/0000-0002-8601-829X}{\orcidicon}}
\address[S. Cygan]{	Instytut Matematyczny, Uniwersytet Wroc\l{}awski, pl. Grunwaldzki 2/4, \hbox{50-384} Wroc\l{}aw, Poland \\ 
\href{https://orcid.org/0000-0002-8601-829X}{orcid.org/0000-0002-8601-829X}}
\email{szymon.cygan@math.uni.wroc.pl}
\urladdr {http://www.math.uni.wroc.pl/~scygan}

\author[A. Marciniak-Czochra]{Anna Marciniak-Czochra
\href{https://orcid.org/0000-0002-5831-6505}{\orcidicon}}
\address[A. Marciniak-Czochra]{
Institute of Applied Mathematics,  Interdisciplinary Center for Scientific Computing (IWR) and BIOQUANT, University of Heidelberg, 69120 Heidelberg, Germany\\
\href{https://orcid.org/0000-0002-5831-6505}{orcid.org/0000-0002-5831-6505}}
\email{anna.marciniak@iwr.uni-heidelberg.de}
\urladdr {http://www.biostruct.uni-hd.de/}

\author[G. Karch]{Grzegorz Karch \href{https://orcid.org/0000-0001-9390-5578}{\orcidicon}}
\address[G. Karch]{	
Instytut Matematyczny, Uniwersytet Wroc\l{}awski, pl. Grunwaldzki 2/4, \hbox{50-384} Wroc\l{}aw, Poland \\ 
\href{https://orcid.org/0000-0001-9390-5578}{orcid.org/0000-0001-9390-5578}}
\email{grzegorz.karch@math.uni.wroc.pl}
\urladdr {http://www.math.uni.wroc.pl/~karch}

\date{\today}

\dedicatory{Dedicated to Eiji Yanagida on the occasion of his birthday}



\begin{abstract}
	Systems consisting of a single ordinary differential equation coupled with one reaction-diffusion equation in a bounded domain and with \rev{the} Neumann boundary conditions \rev{are}  studied in the case of particular nonlinearities from the Brusselator model, the Gray-Scott model, the Oregonator model and a certain predator-prey model. It is shown that the considered systems have the both smooth and discontinuous stationary solutions, however, only discontinuous ones can be stable. 
\end{abstract}

\subjclass[2010]{35K57; 35B35; 35B36; 92C15}

\keywords{Reaction-diffusion equations; stationary solutions, stable and  unstable stationary solutions.}
\maketitle

\section{Introduction} 

We discuss   stationary solutions to certain particular systems consisting of a single ordinary differential equation coupled with one reaction-diffusion equation: 
\begin{nalign}
	u_t  &=   f(u,v), &&x\in\overline{\Omega}, \quad t>0,  \label{eq1}\\
	v_t  &=  \gamma \Delta v+g(u,v), &&  x\in \Omega, \quad t>0,
\end{nalign}
with   unknown functions $u = u (x,t)$ and $v = v(x,t)$. 
System \eqref{eq1} is considered in a bounded, open domain 
\rev{$\Omega\subset \R^N$} with \rev{the} $C^2$-boundary $\partial\Omega$.
Moreover,  the reaction-diffusion equation in \eqref{eq1} is 
supplemented with the Neumann boundary condition \noeqref{Neumann}
\begin{nalign}
	\label{Neumann}
	\partial_{\nu}v  =  0  \qquad \text{for}\quad x\in \partial\Omega, \quad t>0,
\end{nalign}
where $\partial_\nu = \nu\cdot\nabla$ with  $\nu$ denoting  the unit
outer normal vector to $\partial \Omega$. The letter $\gamma>0$ is a constant diffusion coefficient.
In the following, we denote by $\Delta_\nu$ the Laplacian operator together with the Neumann boundary condition \eqref{Neumann}.


Particular versions of such a system have been studied, {\it e.g.},~in the  works
\cite{MR3973251, MR3679890, MR3600397, MR3583499, MR3345329, MR3329327, MR3214197, MR3059757, MR3039206, MR2833346,MR3220545,MR684081,MR579554,10780947202173109,MR4213664,MR730252} where the following two types of stationary solutions $\UVx$ were identified:

\begin{enumerate}
	\item \textit{Regular stationary solutions}, where
	$ U(x) =  k\big(V(x)\big)$ for one $C^2$-function $ k$ and all $x\in \overline{\Omega}$.  
	\label{it:Type1}
	\item  \textit{Jump-discontinuous stationary solutions} with $ U(x)$ obtained using different branches of solutions to the equation $f(U,V)=0$ with respect to $ U$.
\end{enumerate} 

Regular  stationary solutions to  general problem \eqref{eq1}-\eqref{Neumann}
have been investigated recently in the paper \cite{CMCKS01} showing that \rev{they} all  
are unstable. The second work~\cite{CMCKS02}  is devoted to the second type stationary solutions with jump-discontinuities and provides  sufficient conditions for their existence and stability. 
The theory developed in both papers \cite{CMCKS01, CMCKS02}
can be  applied to several  reaction-diffusion-ODE models in spatially homogeneous environments arising from applications as {\it e.g.} studied in 
 \cite{MR3214197,MR3583499,MR3679890, MR3973251,MR3345329,MR2297947,MR684081,MR579554,perthame2020fast,MR2205561,gg}.

In this work, we explain how to apply general results from the papers \cite{CMCKS01, CMCKS02}  to systems of \rev{type} \eqref{eq1} with particular well-known nonlinearities which appear in Turing models. Recall that a system of reaction-diffusion equations 
\begin{nalign}\label{RD:gen} \rev{
	u_t = \varepsilon \Delta u + f(u,v), \qquad  v_t = \gamma \Delta v + g(u,v) }
\end{nalign}
(in a bounded domain and with the Neumann boundary condition) exhibits a Turing instability if it has a constant stationary solution which is stable if $\varepsilon = \gamma = 0$ and unstable for $\varepsilon>0$ and $\gamma>0$. 
In his work \cite{MR3363444}, 
Turing showed that this kind of instability may occurs in some particular reaction-diffusion systems with $\varepsilon>0$ sufficiently small. Due to different nonzero diffusion
rates, the system gives rise spontaneously to stationary pattern formation with a characteristic
length scale from an initial configuration (see \textit{e.g.}~\cite{MR3114654} and the references therein).

In this work, we apply \rev{the} theory from the papers \cite{CMCKS01,CMCKS02} to study nonconstant stationary solutions of some  reaction-diffusion models with the Turing instability in the diffusion degenerated case, namely,  when $\varepsilon = 0$ and $\gamma>0$.  
 In Section \ref{sec:Instability}, we consider the diffusion-degenerated  Gray-Scott model
\begin{nalign}
	u_t =  u^2 v - \alpha u, \qquad 
		v_t  = \gamma \Delta_\nu v - u ^2 v + \beta(1-v)
\end{nalign}
and the diffusion-degenerated  Brusselator system
\begin{nalign}
	u_t =  \alpha + u^2 v - (\beta+1)u,  \qquad 
	v_t = \gamma\Delta_\nu v + \beta u - u^2v. 	
\end{nalign}
By applying results from \cite{CMCKS01, CMCKS02}, we obtain these two \rev{systems} (considered in bounded domains) have stationary solutions of  both types: regular and discontinuous. 
However, their all nonconstant stationary solutions  are unstable.   

Next, in Section \ref{sec:stable}, we discuss two other models: the diffusion-degenerated Oregonator 
\begin{nalign}
	u_t = u - u^2 + \alpha v\frac{\beta-u}{\beta+u}, \qquad v_t = \gamma \Delta_\nu v + u - v
\end{nalign}
and a certain kind of predator-prey system
\begin{nalign}
	u_t = u \left(\frac{u^2}{u^3 + 1}  - v\right),\qquad
	v_t = \gamma\Delta_\nu v  + v (\alpha u -  v - \beta ).
\end{nalign}
In the following, we show how to use theory from the work \cite{CMCKS02} to construct {\it stable discontinuous stationary solutions} of these two systems.

\section{General results} 
\label{sec:Results}
Here, we review briefly results from \cite{CMCKS01,CMCKS02} formulating  them in a form which is suitable to deal with a system consisting of one ODE coupled with one reaction-diffusion equation.

We deal with 
a weak solution 
$( U, V) = \big(  U(x), V(x) \big)$
to the boundary value problem
\begin{nalign}
	\label{DisProbDef}
	f(U,V)&=0,  &&     &&x\in\overline{\Omega},  &&\\
	\gamma \Delta_\nu V+g(U,V)&=0,  &&  &&x\in \Omega,
\end{nalign}
with arbitrary $C^2$-functions $f$ and $g$, with constant $\gamma>0$, and \rev{in an} open bounded domain $\Omega\subseteq \R^N $ with a $C^2$-boundary. 
Recall ({\it cf.}~\cite{CMCKS01,CMCKS02}) that  a weak solution has the following properties: $U\in\Li$, $V\in \W12$, first equation in \eqref{DisProbDef} holds true almost everywhere and the second equation is satisfied in \rev{the} usual weak sense in the \rev{Sobolev space} $\W12$.  

We require that problem \eqref{DisProbDef} has a constant solution, namely,  a \rev{constant vector}
\begin{nalign}
	(\overline{  U}, \overline{ V}) \in \R^2 \quad\text{such that}\quad   f(\overline{U}, \overline{V}) = 0  \quad\text{and} \quad g(\overline{U}, \overline{V}) = 0.
\end{nalign} 
In this work, we use the following notation 
\begin{nalign}
	\label{ass:Matrix}
	 a_0 = f_{u}\left( \overline{U}, \overline{ V} \right),\quad  b_0 =  f_{v}\left( \overline{U}, \overline{ V} \right), \quad c_0 =g_{u}\left( \overline{U}, \overline{ V} \right), \quad d_0 = g_v\left( \overline{U}, \overline{ V} \right)
\end{nalign}	
and we always impose the following assumption. 

\begin{assumption}
	\label{ass:a0}
	We assume that $a_0=f_u(\oU,\oV) \neq 0$. 
\end{assumption}

\subsection{Regular stationary solutions}

 A solution $\UVx$ of system \eqref{DisProbDef} is called regular if there exists  a $C^2$-function $ k: \R\to \R$ such that $ U(x)= k(V(x))$ for all $x\in \Omega$ and
 \begin{nalign}
 	f\big( U(x),V(x)\big) =  f\big( k(V(x)),V(x)\big)=0 \quad \text{for all}\quad x\in \Omega,
 \end{nalign}
 where $V=V(x)$ is a solution of  the elliptic Neumann problem  
 \begin{align}
 	\gamma\Delta_\nu V+h(V)=0\qquad   \text{for}\quad x\in \Omega 
 \end{align}
 with $h(V)=g\big( k(V),V\big)$. \rev{Now,} we recall a result from work \cite{CMCKS01} on the existence of regular stationary solutions in the case of one ODE coupled with one reaction-diffusion equation.

\begin{prop}[{\cite[Prop.~2.5]{CMCKS02}}] 
	\label{thm:reg}
	Let $N \leqslant 6$ and let $(\overline{  U}, \overline{ V}) \in \R^2$ be a constant solution of problem~\eqref{DisProbDef} satisfying Assumption \ref{ass:a0}. If
	\begin{nalign}
		\label{reg:Det}
		\frac{1}{a_0 }\det\begin{pmatrix}
			a_0  & b_0  \\
			c_0  & d_0
		\end{pmatrix} =\gamma\mu_k> 0,
	\end{nalign}
	for some $\mu_k$ eigenvalues of $-\Delta_\nu$, then there exists a sequence of real numbers  $d_\ell\to 1$ such that  the following ``perturbed'' problem
	\begin{nalign}\label{seq1:d}
		f(U,V)=0,&&     \quad &x\in\overline{\Omega},  && \\
		d_\ell \Delta_\nu V+(1-d_\ell)(V-\overline{ V})+g( U,V)=0,&&  \quad &x\in \Omega
	\end{nalign}
	has a non-constant regular solution. 
\end{prop}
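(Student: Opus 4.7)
The approach is to reduce the coupled perturbed system to a scalar elliptic equation in $V$ alone and then apply the Crandall--Rabinowitz local bifurcation theorem with $d$ as the bifurcation parameter. Since Assumption~\ref{ass:a0} gives $a_0=f_u(\oU,\oV)\neq 0$, the Implicit Function Theorem produces a $C^2$-function $k$ on a neighborhood of $\oV$ with $k(\oV)=\oU$ and $f(k(V),V)\equiv 0$. Substituting $U=k(V)$ into the second equation of \eqref{seq1:d} reduces the perturbed system to the scalar Neumann problem
\[
d\,\Delta_\nu V + (1-d)(V-\oV) + h(V) = 0,\qquad h(V):=g(k(V),V),
\]
for which $V\equiv\oV$ is a trivial solution at every $d$, since $h(\oV)=g(\oU,\oV)=0$. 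The plan is to bifurcate a branch of non-constant solutions off this trivial family.

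Computing the linearization at $V=\oV$ is the key algebraic input. Differentiating $f(k(V),V)\equiv 0$ at $\oV$ yields $k'(\oV)=-b_0/a_0$, and the chain rule gives
\[
h'(\oV)=c_0\,k'(\oV)+d_0=\frac{a_0 d_0-b_0 c_0}{a_0}=\gamma\mu_k
\]
by hypothesis \eqref{reg:Det}. Writing $V=\oV+w$, the linearized operator reads $L_d:=d\,\Delta_\nu+(1-d+\gamma\mu_k)I$; acting on a Neumann eigenfunction $\phi_j$ of $-\Delta_\nu$ with eigenvalue $\mu_j$ gives $L_d\phi_j=(1+\gamma\mu_k-d(1+\mu_j))\phi_j$, so $L_d$ has non-trivial kernel precisely at the candidate bifurcation points $d_j:=(1+\gamma\mu_k)/(1+\mu_j)$. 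The resonant choice $\mu_j=\gamma\mu_k$ forces $d_j=1$, which is what accounts for the asserted accumulation of $d_\ell$ at $1$ and motivates the particular form of the perturbation term $(1-d)(V-\oV)$.

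I would then recast the problem as $F(d,w)=0$ in a Banach pair $X=W^{2,p}_\nu(\Omega)$, $Y=L^p(\Omega)$ with $p$ large; the restriction $N\leqslant 6$ enters via Sobolev embeddings to guarantee that the Nemytskii map $w\mapsto h(\oV+w)$ is $C^2$ from $X$ to $Y$ and that weak $W^{1,2}$-solutions are upgraded to classical $C^2$ ones by elliptic regularity bootstrap. At an index $j$ with simple eigenvalue $\mu_j$, the transversality condition $\partial_d L_{d_j}\phi_j=-(1+\mu_j)\phi_j\notin\mathrm{Range}(L_{d_j})$ is immediate from pairing with $\phi_j$ in $L^2$, and Crandall--Rabinowitz then produces a $C^1$-curve $s\mapsto(d(s),w(s))$ of non-trivial solutions through $(d_j,0)$. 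Evaluating at any sequence $s_\ell\to 0$ along this curve yields the required parameters $d_\ell=d(s_\ell)$ together with non-constant regular solutions $(U_\ell,V_\ell)=(k(V_\ell),V_\ell)$ of \eqref{seq1:d}.

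The main technical obstacle is ensuring the bifurcation is set up at the value $d=1$ and verifying the simplicity and transversality hypotheses there: if the relevant eigenspace is multi-dimensional, the direct Crandall--Rabinowitz theorem must be replaced by a Lyapunov--Schmidt reduction or a topological degree argument, and one must argue that the perturbative term $(1-d)(V-\oV)$ indeed realigns the spectrum so that the resonance $\mu_j=\gamma\mu_k$ is achievable for a sequence of indices. A secondary difficulty is the regularity bootstrap to $C^2$-solutions, which is precisely what the dimension restriction $N\leqslant 6$ is tailored to provide through the Sobolev embedding $W^{2,p}_\nu(\Omega)\hookrightarrow C^0(\overline\Omega)$ for appropriate $p$.
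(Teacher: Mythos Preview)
The paper does not actually prove this proposition; it is quoted verbatim from \cite{CMCKS02} as part of the review in Section~\ref{sec:Results}, so there is no in-paper argument to compare against. Your overall strategy---solve $f(U,V)=0$ for $U=k(V)$ via the implicit function theorem, reduce to the scalar Neumann problem $d\,\Delta_\nu V+(1-d)(V-\oV)+h(V)=0$ with $h(V)=g(k(V),V)$, compute $h'(\oV)=(a_0d_0-b_0c_0)/a_0$, and run Crandall--Rabinowitz with $d$ as bifurcation parameter---is the standard route and is almost certainly what the cited source does; the role you assign to $N\leqslant 6$ (Sobolev embeddings for the $C^2$ Nemytskii map and elliptic bootstrap) is also the right one.

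There is, however, a genuine gap at the resonance step. You assert that ``the resonant choice $\mu_j=\gamma\mu_k$ forces $d_j=1$,'' but nothing in the hypotheses says $\gamma\mu_k$ is itself an eigenvalue of $-\Delta_\nu$; only $\mu_k$ is. With the perturbed equation exactly as written you correctly found $d_j=(1+\gamma\mu_k)/(1+\mu_j)$, and the natural choice $j=k$ gives $d_k=(1+\gamma\mu_k)/(1+\mu_k)$, which equals $1$ only when $\gamma=1$. Since the $\mu_j$ are discrete and tend to infinity, the $d_j$ cannot accumulate at $1$ in general, so your mechanism for $d_\ell\to 1$ collapses. The point of the artificial term $(1-d)(V-\oV)$ is precisely to engineer the bifurcation value $1$ at the index $j=k$, which works once the diffusion is normalized (either $\gamma=1$ in the cited formulation, or a factor $\gamma$ in front of $d_\ell\Delta_\nu$ here); the sequence $d_\ell\to 1$ then arises by sampling the single Crandall--Rabinowitz curve through $(1,0)$, exactly as in your last paragraph, and not from varying $j$. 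You rightly flagged ``ensuring the bifurcation is set up at the value $d=1$'' as the main obstacle, but the fix you propose does not close it.
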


All regular stationary solutions are \rev{unstable,} except the degenerate case when $f_{ u}\UVx \leqslant 0$ for all $x\in\oo$ and $f_u\big(U(x_0), V(x_0)\big) = 0$ for some $x_0\in \oo$, see the following theorem.  
 
\begin{theorem} [{\cite[Thm.~2.7 and Thm.~2.8]{CMCKS01}}]
	\label{reg:Instability}
	Let $( U, V) = \big( U(x), V(x) \big)$ be a non-constant regular stationary solution of problem \eqref{eq1}-\eqref{Neumann} such that
	\begin{itemize}
		\item either $f_{ u}\big( U(x_0), \, V(x_0)\big) > 0$ for some  $x_0\in \oo$, \\
		\item or $f_{ u}\UVx < 0$ for all  $x\in \oo$  and the domain $\Omega$ is convex.
	\end{itemize}
	Then $( U, V)$ is nonlinearly unstable. 
\end{theorem}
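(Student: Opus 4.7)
The plan is to derive nonlinear instability from spectral instability of the linearization around $(U,V)$ via the standard principle of linearized instability for mixed ODE/reaction-diffusion systems. Writing
\[
a(x) = f_u\UVx,\quad b(x) = f_v\UVx, \quad c(x) = g_u\UVx, \quad d(x) = g_v\UVx,
\]
the linearization reads
\[
\L \begin{pmatrix}\varphi\\ \psi\end{pmatrix} = \begin{pmatrix} a(x)\varphi + b(x)\psi \\ \gamma \Delta_\nu \psi + c(x)\varphi + d(x)\psi \end{pmatrix},
\]
acting on $\Li \times \W12$. Since $V$ is a $C^2$ solution of an elliptic problem and $k$ is $C^2$, all four coefficients are continuous on $\oo$. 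It is therefore enough to exhibit $\lambda \in \sigma(\L)$ with $\Re \lambda > 0$.

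For the first alternative I would exploit the absence of diffusion in the $u$-equation. The $(1,1)$-entry of $\L$ is the multiplication operator by the continuous function $a(\cdot)$, and standard perturbation arguments show that $\sigma_{\mathrm{ess}}(\L)$ contains the range of $a$; the hypothesis $a(x_0) > 0$ then places an entire open interval of positive reals inside $\sigma_{\mathrm{ess}}(\L)$. This is the essential-spectrum mechanism already used in \cite{CMCKS01, CMCKS02}, and is simpler in the present regular setting because $a$ is continuous.

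For the second alternative one has $\sigma_{\mathrm{ess}}(\L) \subseteq \{\Re\lambda \leqslant \max_{\oo} a < 0\}$, so an unstable point eigenvalue must be produced by hand. I would apply the shadow reduction: for every $\lambda$ with $\Re\lambda > \max_{\oo} a$ the first component of the eigenvalue equation $\L(\varphi,\psi)^{\top} = \lambda(\varphi,\psi)^{\top}$ can be solved for $\varphi = b(x)\psi/(\lambda-a(x))$, reducing the system to the scalar Neumann eigenvalue problem
\[
\gamma \Delta_\nu \psi + \Big( d(x) + \frac{b(x)c(x)}{\lambda - a(x)} \Big)\psi = \lambda \psi.
\]
Let $\mu(\lambda)$ denote the principal eigenvalue of the operator on the left-hand side. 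I would verify that $\mu$ depends continuously on $\lambda$, that $\mu(\lambda) - \lambda \to -\infty$ as $\lambda\to\infty$ (since the potential stays bounded while we subtract $\lambda$), and crucially that $\mu(0) > 0$. At $\lambda=0$ the potential reduces to $d(x) - b(x)c(x)/a(x) = h'(V(x))$, where $h(v) := g(k(v),v)$ is obtained from implicit differentiation of the constraint $f(k(v),v) \equiv 0$; the scalar Neumann problem $\gamma \Delta V + h(V) = 0$ is then precisely the reduced equation whose non-constant solution $V$ is, by the classical Casten--Holland--Matano theorem on convex domains, linearly unstable. Consequently the principal eigenvalue of $\gamma\Delta_\nu + h'(V)$ is strictly positive, i.e.\ $\mu(0) > 0$, and the intermediate value theorem supplies $\lambda^* > 0$ with $\mu(\lambda^*) = \lambda^*$, giving the desired unstable eigenvalue of $\L$.

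The hardest step will be the second alternative. Beyond the algebraic identity $d - bc/a = h'(V)$, which is routine, the delicate points are: justifying that the shadow reduction genuinely captures the principal eigenvalue of $\L$ rather than a spurious branch; establishing enough continuity/compactness of $\mu(\lambda)$ near the threshold $\max_{\oo} a$ for the IVT argument to apply; and invoking the Casten--Holland--Matano machinery in the precise regularity class dictated by $V$. Once a positive point of $\sigma(\L)$ has been found, the passage to nonlinear instability is a routine application of the mild-solution semigroup framework already used in \cite{CMCKS01, CMCKS02}.
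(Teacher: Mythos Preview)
The paper does not prove this statement; it is quoted from \cite[Thms.~2.7 and 2.8]{CMCKS01}, so there is no in-paper proof to compare your proposal against. Your sketch is nonetheless faithful to the arguments in that reference: the first alternative is handled by the essential-spectrum mechanism (this is precisely the content of \cite[Thm.~4.6]{CMCKS01}, mentioned after Theorem~\ref{thm:InstabilityAutocata} in the present paper), and for the second alternative the shadow reduction to the scalar equation $\gamma\Delta_\nu V + h(V) = 0$ followed by the Casten--Holland/Matano instability result on convex domains is exactly the route taken in \cite{CMCKS01}. The delicate points you flag---continuity of $\lambda\mapsto\mu(\lambda)$ and the passage from spectral to nonlinear instability when the unstable spectrum is essential rather than an isolated eigenvalue---are real, and both are resolved in \cite{CMCKS01}; they are not addressed in the present survey.
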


\subsection{Discontinuous solutions}

\rev{Now, we impose additional assumptions} on a constant stationary solution  using notation \eqref{ass:Matrix}. 

\begin{assumption}
	\label{ass:StatSol}
	The constant solution  $(\oU, \oV) \in \R^2$ of problem \eqref{DisProbDef} satisfies
	\begin{nalign}
		\label{Sineq}
		\frac{1}{ a_0 }\det\begin{pmatrix}
			a_0 &  b_0 \\
			c_0  &  d_0
		\end{pmatrix} \neq \gamma\mu_k,
	\end{nalign}
	for each eigenvalue $\mu_k$ of $-\Delta_\nu$.	
\end{assumption}

Notice that if $ a_0 = f_{u}\left( \overline{U}, \overline{ V} \right) \neq 0$, by the Implicit mapping theorem, there exist an open neighborhood $\V \subseteq \R$ of $\overline{ V}$ and a function $k \in C^2(\V,\R)$ such that
\begin{nalign}
	k(\overline{V}) = \overline{U} \quad \text{and} \quad f(k(w),w) = 0\quad \text{for all } w\in \V.
\end{nalign}

In the following assumption,  the equation $f(U, V) = 0$ is required to have  \textit{two different branches of solutions} with respect to $U$ on a common domain $\V$.

\begin{assumption}
	\label{ass:DiscontinuousStationaryTwoBranches}
	We assume that there exists an open set $\V \subseteq \R$ and $k_1, k_2 \in C^2(\V, \R)$ such that
	\begin{itemize}
		\item $ \overline{ V} \in \V$,
		\item $\overline{ U} = k_1 (\overline{ V}), \quad k_1 (\overline{ V} ) \neq k_2(\overline{ V})$, 
		\item $ f\big(k_1(w),w)\big) = f\big( k_2(w),w)\big) = 0 \ \text{for all} \ w\in \V.$
	\end{itemize}	
\end{assumption}

Such two branches $k_1, k_2$ allow us to construct discontinuous solution of problem~\eqref{DisProbDef}.

\begin{theorem}[Existence of discontinuous stationary solutions {\cite[Thm.~2.6]{CMCKS02}}]
	\label{DisExBan}
	Assume that
	\begin{itemize}
		\item problem \eqref{DisProbDef}  has a constant solution $(\overline{   U}, \overline{ V}) \in \R^2$ satisfying \rev{Assumptions} \ref{ass:a0} and \ref{ass:StatSol},
		\item \rev{Assumption} \ref{ass:DiscontinuousStationaryTwoBranches} holds true,
		\item \rev{for an arbitrary open set $\Omega_1 \subset \Omega$ we put $\Omega_2 = \Omega\setminus\overline{\Omega}_1$} \rev{and the  sets $\Omega_1$ and $\Omega_2$ have  nonzero Lesbegue measures.}
	\end{itemize}
	There is $\varepsilon_0 >0$ such that for all $\varepsilon \in (0, \,\varepsilon_0)$ there exists $\delta >0$ such that if $|\Omega_2|< \delta$ 
	then problem \eqref{DisProbDef} has a weak solution  with the following properties:
	\begin{itemize}
		\item $( U, V)\in L^\infty(\Omega) \times \W2p$ for each $p\in[2,\infty)$,
		\item $V=V(x)$ is a weak solution to problem
		\begin{nalign}
			\label{eq:DinsontinousSolutionsVEquation}
			\gamma\Delta_\nu V + g( U,V) = 0 \quad \text{for} \quad x\in \Omega,
		\end{nalign}
		where
		\begin{nalign}
			\label{Uform}
			 U (x) = \begin{cases}
				 k_1\big(V(x)\big), \quad x\in \Omega_1, \\
				 k_2\big(V(x)\big), \quad x\in \Omega_2,
			\end{cases}
		\end{nalign} 
		satisfies $ f\big( U(x),V(x)\big) = 0$ for almost all $x\in \overline{\Omega}$.
		\item the solution $\UV$ stays around the points $(\oU, \oV)$ and \rev{$\big(k_2(\oV),\oV\big)$} in the following sense
		\begin{nalign}
			\label{DisVar}
			\|V - \overline{ V} \|_\Li < \varepsilon \quad \text{and} \quad \| U -   \oU \|_{L^\infty(\Omega_1)} + \| U -   k_2 (\overline{ V}) \|_{L^\infty(\Omega_2)}  < C\varepsilon
		\end{nalign}
		for a constant $C = C( f,  g)$. 
	\end{itemize}
\end{theorem}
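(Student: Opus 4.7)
The plan is to turn \eqref{DisProbDef} into a single scalar semilinear Neumann problem for $V$ alone and then to solve it by a fixed-point argument in which both $\|V-\oV\|_\Li$ and $|\Omega_2|$ play the role of small parameters. Substituting the ansatz \eqref{Uform} into \eqref{eq:DinsontinousSolutionsVEquation} and writing $h_i(w) := g\bigl(k_i(w),w\bigr)$ for $i=1,2$ (which are $C^2$ on the neighborhood $\V$ of $\oV$ provided by Assumption \ref{ass:DiscontinuousStationaryTwoBranches}), the problem for $V$ reduces to
\begin{nalign}
\gamma \Delta_\nu V + h_1(V) + \bigl(h_2(V)-h_1(V)\bigr)\chi_{\Omega_2} \;=\; 0 \qquad \text{in } \Omega,
\end{nalign}
where $\chi_{\Omega_2}$ is the characteristic function of $\Omega_2$. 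Since $h_1(\oV) = g(\oU,\oV) = 0$, the constant $V\equiv \oV$ is an exact solution when $|\Omega_2|=0$, and the idea is to perturb off of it as $|\Omega_2|$ is turned on.

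Setting $W := V-\oV$ and linearizing at $W=0$, implicit differentiation of $f\bigl(k_1(V),V\bigr)=0$ yields $k_1'(\oV) = -b_0/a_0$, hence
\begin{nalign}
h_1'(\oV) \;=\; c_0\, k_1'(\oV) + d_0 \;=\; \frac{1}{a_0}\det\begin{pmatrix} a_0 & b_0 \\ c_0 & d_0 \end{pmatrix}.
\end{nalign}
The equation reads $\L W = \mathcal{R}(W)$, with $\L := \gamma\Delta_\nu + h_1'(\oV)$ acting on Neumann functions and
\begin{nalign}
\mathcal{R}(W) \;=\; -\bigl[h_1(\oV+W)-h_1'(\oV)W\bigr] \;-\; \bigl(h_2(\oV+W)-h_1(\oV+W)\bigr)\chi_{\Omega_2}.
\end{nalign}
The spectrum of $\L$ equals $\{h_1'(\oV)-\gamma\mu_k\}_k$, and Assumption \ref{ass:StatSol} is precisely the statement that $0$ does not belong to it; standard $L^p$-elliptic theory then gives that $\L\colon\{W\in W^{2,p}(\Omega):\partial_\nu W=0\}\to L^p(\Omega)$ is an isomorphism for every $p\in[2,\infty)$, with bound depending only on $\Omega$, $\gamma$ and $h_1'(\oV)$.

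I would then fix $p$ large enough that $W^{2,p}(\Omega)\hookrightarrow \Li$ and apply the Banach fixed-point theorem to $T:=\L^{-1}\mathcal{R}$ on a closed ball $B_r$ of radius $r=C\varepsilon$ in $W^{2,p}(\Omega)$. A Taylor expansion shows that the first bracket in $\mathcal{R}$ is $O(\|W\|_\Li^2)=O(r^2)$ in $L^p$ and has Lipschitz constant $O(r)$ on $B_r$, while the second bracket is pointwise bounded on $B_r$ and supported in $\Omega_2$, so both its $L^p$-norm and its Lipschitz constant on $B_r$ are $O(|\Omega_2|^{1/p})$. Choosing $\varepsilon$ small to dominate the quadratic part and then $\delta$ so that $|\Omega_2|^{1/p}\ll\varepsilon$ yields the self-map and contraction properties, producing a unique fixed point $W\in B_r$. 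Putting $V:=\oV+W$ and defining $U$ through \eqref{Uform} gives the desired weak solution, and \eqref{DisVar} follows from the smallness of $\|W\|_\Li$ together with the local Lipschitz continuity of $k_1,k_2$ near $\oV$. The main obstacle is keeping all constants uniform as $\Omega_2$ varies in shape, not only in measure; this succeeds because $\L^{-1}$ depends solely on $\Omega$, while the only dependence on the subset $\Omega_2$ sits in the piecewise source term, whose $L^p$-scaling $|\Omega_2|^{1/p}$ is precisely what lets the perturbation beat the fixed lower bound on $\|\L^{-1}\|^{-1}$ and close the contraction.
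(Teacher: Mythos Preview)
The paper you are working from does not actually prove this theorem: it is quoted verbatim from \cite[Thm.~2.6]{CMCKS02} as part of the review in Section~\ref{sec:Results}, and no argument is supplied here. Your sketch is nonetheless a correct and complete outline of the proof, and the internal label \texttt{DisExBan} strongly suggests that the original argument in \cite{CMCKS02} is exactly the Banach fixed-point scheme you describe: reduce to the scalar Neumann problem for $V$ via the branch ansatz, linearize at $\oV$, use Assumption~\ref{ass:StatSol} to invert $\gamma\Delta_\nu + h_1'(\oV)$ in $W^{2,p}$, and close a contraction where the quadratic remainder is controlled by $\varepsilon$ and the jump term by $|\Omega_2|^{1/p}$.

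Two small points worth tightening. First, your contraction yields $V\in W^{2,p}(\Omega)$ for the single large $p$ you fixed; the statement claims $V\in W^{2,p}$ for every $p\in[2,\infty)$, which follows by a one-line bootstrap once you know $V\in L^\infty$ and hence $g(U,V)\in L^\infty\subset L^q$ for all $q$. Second, the theorem is phrased as ``for all $\varepsilon\in(0,\varepsilon_0)$ there exists $\delta$'', so make explicit that $\varepsilon_0$ is chosen first (to kill the $O(r^2)$ term uniformly), and only afterwards is $\delta=\delta(\varepsilon)$ chosen so that $|\Omega_2|^{1/p}$ is small relative to $\varepsilon$; your last paragraph has the dependence in the right order but it helps to flag that this matches the quantifier structure of the statement.
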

Next, we discuss a stability of discontinuous stationary solutions by the linearization procedure. Here, we say that $\UV$ is {\it linearly stable (unstable)}  if the zero solution to the following system
\begin{nalign}
	\label{eq:lin}
	\frac{\partial}{\partial t}\begin{pmatrix}  \varphi \\ \psi \end{pmatrix} &= \begin{pmatrix} 0 \\ \gamma\Delta_\nu \psi \end{pmatrix} + \begin{pmatrix}  a &  b \\  c & d \end{pmatrix} \begin{pmatrix}  \varphi \\ \psi \end{pmatrix}   
	\equiv \L_p\begin{pmatrix}  \varphi \\ \psi \end{pmatrix}  
\end{nalign}
with the bounded  (and possibly $x$-dependent) coefficients 
\begin{nalign}
	\label{eq:LinSys}
	a = f_{u}\left( {U}, { V} \right),\quad  b =  f_{v}\left( {U}, { V} \right), \quad c =g_{u}\left( {U}, { V} \right), \quad d = g_v\left( {U}, { V} \right),
\end{nalign}
  is stable (unstable).

\begin{theorem}[Instability of stationary solutions \cite{CMCKS01,MR3600397}] 
	\label{thm:InstabilityAutocata}
	Let $\UV$ be an arbitrary weak stationary solution to problem \eqref{eq1}-\eqref{Neumann}. If
	\begin{nalign}\label{autocat}
		\rev{f_{ u} \big( U(x), \, V(x) \big)> 0 } \qquad \text{for $x$ from a set of a positive measure} 
	\end{nalign}
	then $\UV$ is linearly unstable in $\Lp{}^2$ for each $p\in(1,\infty)$. 
\end{theorem}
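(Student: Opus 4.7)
The plan is to exhibit a point $\lambda>0$ in the $L^p$-spectrum of the linearized operator $\L_p$. Once such a $\lambda$ is located, standard $C_0$-semigroup theory prevents the zero solution of \eqref{eq:lin} from being Lyapunov stable, which is the instability we need. The target point will lie in the essential range of the coefficient $a(x)=f_u(U(x),V(x))$, which by hypothesis \eqref{autocat} meets the positive half-line on a set of positive measure; in particular $\mathrm{ess\,sup}_\Omega\, a(x)>0$ and this value belongs to the essential range.

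The first main step is a Schur complement analysis of the block operator
\begin{nalign}
\L_p - \lambda = \begin{pmatrix} a(x)-\lambda & b(x) \\ c(x) & \gamma\Delta_\nu + d(x) - \lambda \end{pmatrix}.
\end{nalign}
I would choose $\lambda>0$ in the essential range of $a$ and outside the $L^p$-spectrum of $\gamma\Delta_\nu + d(x)$; the latter is discrete because its resolvent is compact by elliptic regularity and Rellich's theorem, so such a $\lambda$ exists. The lower-right block is then invertible on $L^p$, and invertibility of $\L_p - \lambda$ reduces to invertibility of the Schur complement
\begin{nalign}
S_\lambda := (a-\lambda) - b\big(\gamma\Delta_\nu + d - \lambda\big)^{-1} c.
\end{nalign}
Because $\big(\gamma\Delta_\nu+d-\lambda\big)^{-1}$ is compact on $L^p$, $S_\lambda$ is a compact perturbation of the multiplication operator $\varphi\mapsto(a-\lambda)\varphi$. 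Weyl's theorem on stability of the essential spectrum then yields $0\in\sigma_{\mathrm{ess}}(S_\lambda)$, since $0$ lies in the essential spectrum of multiplication by $a-\lambda$ whenever $\lambda$ is in the essential range of $a$. Hence $S_\lambda$ is not invertible and $\lambda\in\sigma(\L_p)$.

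A direct Weyl-sequence realisation of the same conclusion clarifies the picture: pick mutually disjoint measurable sets $A_n\subset\Omega$ with $|A_n|\to 0$ on which $|a(x)-\lambda|<1/n$, set $\varphi_n := |A_n|^{-1/p}\mathbf{1}_{A_n}$, and define $\psi_n := -\big(\gamma\Delta_\nu + d - \lambda\big)^{-1}(c\varphi_n)$. By construction, the second component of $(\L_p-\lambda)(\varphi_n,\psi_n)$ vanishes; the first component is $(a-\lambda)\varphi_n + b\psi_n$, whose first term has $L^p$-norm at most $1/n$, and whose second term tends to zero in $L^p$ because $\varphi_n\rightharpoonup 0$ and the embedding $W^{2,p}\hookrightarrow L^p$ is compact. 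Hence $\lambda$ belongs to the approximate point spectrum of $\L_p$.

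The main obstacle I expect is extracting honest instability from the spectral information, because $\L_p$ is \textbf{not} sectorial (the first equation has no diffusion), so the spectral mapping theorem is not automatic. One resolves this by using the approximate point spectrum version of spectral inclusion for $C_0$-semigroups, $e^{t\sigma_{ap}(\L_p)}\setminus\{0\}\subseteq \sigma_{ap}(e^{t\L_p})$, which forces $\|e^{t\L_p}\|\geq e^{t\lambda}\to\infty$; equivalently, one propagates the approximate eigenvector $(\varphi_n,\psi_n)$ through the semigroup and checks growth at rate $e^{t\lambda}$. Either route delivers linear instability of the zero solution in $L^p(\Omega)^2$ for every $p\in(1,\infty)$, completing the argument.
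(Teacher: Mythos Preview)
Your approach is essentially the one the paper invokes: the paper gives no self-contained argument but simply cites \cite[Thm.~4.5]{MR3600397} and \cite[Thm.~4.6]{CMCKS01}, which characterise $\sigma(\L_p)$ and show in particular that the essential range of $a=f_u(U,V)$ lies in it. Your Schur-complement reduction and explicit Weyl sequence are precisely how those references establish that inclusion, and your passage from a positive approximate eigenvalue to semigroup growth via the spectral inclusion $e^{t\sigma_{ap}(\L_p)}\subset\sigma(e^{t\L_p})$ is the standard route used there as well.

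There is one small unjustified step. You assert that one can choose $\lambda>0$ in the essential range of $a$ \emph{and} outside $\sigma(\gamma\Delta_\nu+d)$, on the grounds that the latter is discrete. But the positive part of the essential range of $a$ may itself be finite---indeed in the Gray--Scott application of Section~\ref{sec:Instability} one has $f_u(U,V)\in\{-\alpha,\alpha\}$ along any discontinuous stationary solution---and nothing rules out that the single positive value coincides with an eigenvalue of $\gamma\Delta_\nu+d$. The repair is straightforward: when $\lambda$ happens to be such an eigenvalue, replace the full resolvent in your definition of $\psi_n$ by the reduced resolvent on the complement of the (finite-dimensional) eigenspace; then the second component of $(\L_p-\lambda)(\varphi_n,\psi_n)$ becomes the finite-rank spectral projection applied to $c\varphi_n$, which still tends to $0$ because $\varphi_n\rightharpoonup 0$. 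With this adjustment your argument is complete and matches the cited results.
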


 The assumption \eqref{autocat} is called the {\it autocatalysis condition} in the work \cite{MR3600397}. It appears in models with the Turing instability and  it leads to an instability of the both: constant and nonconstant stationary solutions. Theorem  \ref{thm:InstabilityAutocata} is an immediate consequence of the results in \cite[Thm.~4.5]{MR3600397} and 
 \cite[Thm.~4.6]{CMCKS01}, where the spectrum of the operator 
$\L_p$ defined in \eqref{eq:lin} and acting on the space $L^p(\Omega)^2$ is described.

Next, we discuss a stability of discontinuous stationary solutions and we require
 from the constant solution $(\overline{  U}, \overline{V})\in \R^2$ used in Theorem \ref{DisExBan} to have the following \rev{additional properties}.  

\begin{assumption}
	\label{ass:LinearStability}
	\rev{For the numbers defined in \eqref{ass:Matrix},} assume that the following inequalities \rev{hold} true
	\begin{nalign}
		 a_0<0, \quad d_0<0 \quad \text{and the matrix } \quad \begin{pmatrix}  a_0 &  b_0 \\  c_0 & d_0, \end{pmatrix} 
	\end{nalign} 
	 has both  eigenvalues with strictly negative real parts. 
\end{assumption}

\begin{theorem}[Stability of discontinuous stationary solutions \cite{CMCKS02}]
	\label{thm:ApplicationSystemStabilityDiscontinuous2}
	Let the assumptions of Theorem \ref{DisExBan} \rev{hold} true.  If, moreover,
	\begin{itemize}
		\item Assumption \ref{ass:LinearStability} is valid,
		\item $\varepsilon>0$ and $|\Omega_2|>0$ are small enough in Theorem \ref{DisExBan},
		\item the following inequalities are satisfied
		\begin{nalign}
			\label{eq:SecBranch}
			  f_{ u}  ( k_2(\overline{V}), \, \overline{ V}  \big) < 0 \quad \text{ and } \quad 	 g_v\big(  k_2 (\overline{V}), \overline{V}\big) < 0.
		\end{nalign}
	\end{itemize}
	then the stationary solution $\UV$ constructed in Theorem \ref{DisExBan} is exponentially stable in $\Li$.
\end{theorem}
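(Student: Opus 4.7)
The approach is the classical principle of linearised stability: linearise \eqref{eq1}--\eqref{Neumann} around the stationary solution $(U,V)$ from Theorem~\ref{DisExBan}, show that the spectrum of the generator $\L_p$ introduced in~\eqref{eq:lin} lies in a half-plane $\{\Re\lambda\le-\eta\}$ for some $\eta>0$ independent of the smallness parameters $\varepsilon$ and $|\Omega_2|$, and finally transfer this exponential decay of the linear semigroup to nonlinear exponential stability in $L^\infty(\Omega)^2$. The transfer step exploits the block structure of $\L_p$: the non-diffusive first component is a pure multiplication, while the second is governed by the sectorial operator $\gamma\Delta_\nu+d$.

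I would first locate the essential spectrum of $\L_p$ using the description in~\cite[Thm.~4.6]{CMCKS01}: since only the second row of~\eqref{eq:lin} carries a derivative, $\sigma_{\mathrm{ess}}(\L_p)$ coincides with the essential range of the multiplier $a(x)=f_u(U(x),V(x))$. By the closeness estimate~\eqref{DisVar}, this range sits in an $\mathcal{O}(\varepsilon)$-neighbourhood of the two numbers $a_0$ and $f_u(k_2(\oV),\oV)$, both of which are strictly negative by Assumption~\ref{ass:LinearStability} and~\eqref{eq:SecBranch}. Consequently, for $\varepsilon$ small, $\sigma_{\mathrm{ess}}(\L_p)\subset\{\Re\lambda\le-\eta_1\}$ for some fixed $\eta_1>0$.

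For the discrete spectrum I would apply the standard reduction: off $\sigma_{\mathrm{ess}}(\L_p)$ the first eigenvalue equation gives $\varphi=b\psi/(\lambda-a)$, and substitution produces the scalar Neumann problem
\begin{equation}
\gamma\Delta_\nu\psi+\Bigl(d-\lambda+\tfrac{bc}{\lambda-a}\Bigr)\psi=0\qquad\text{in }\Omega.
\end{equation}
In the limiting constant-coefficient configuration $|\Omega_2|=0$ the eigenvalues coincide with those of the $2\times 2$ matrices $\bigl(\begin{smallmatrix}a_0&b_0\\ c_0&d_0-\gamma\mu_k\end{smallmatrix}\bigr)$ over $\mu_k\in\sigma(-\Delta_\nu)$. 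Under Assumption~\ref{ass:LinearStability} the trace $a_0+d_0-\gamma\mu_k$ is uniformly at most $a_0+d_0<0$ and the determinant $(a_0d_0-b_0c_0)-a_0\gamma\mu_k$ is uniformly at least $a_0d_0-b_0c_0>0$, while Assumption~\ref{ass:StatSol} keeps these eigenvalues separated from the essential range of $a$. Hence the real parts are bounded above by some fixed $-\eta_2<0$ uniformly in $k$.

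The main obstacle is transferring this spectral gap from the homogeneous limit to the genuinely discontinuous case $|\Omega_2|>0$, because the jumps of $a,b,c,d$ across $\partial\Omega_1\cap\Omega$ are not small in sup norm and ordinary perturbation theory for sectorial operators is unavailable. I would argue by compactness and contradiction: assume that along parameters $\varepsilon_n,|\Omega_{2,n}|\to 0$ there exist eigenvalues $\lambda_n$ with $\Re\lambda_n>-\eta/2$, where $\eta=\min\{\eta_1,\eta_2\}$; normalise the corresponding eigenfunctions $\psi_n$ in $L^2(\Omega)$; use the uniform $W^{2,p}$ elliptic estimates available because $\lambda_n$ stays bounded away from the essential range of $a$ (so the reduced equation is uniformly elliptic with $L^\infty$ coefficients); extract a weak limit; and identify it as a nontrivial eigenfunction of the limiting constant-coefficient problem with eigenvalue in $\{\Re\lambda\ge-\eta/2\}$, contradicting the previous step. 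Once the uniform spectral gap is established, analytic-semigroup decay for the parabolic $\psi$-equation combined with Duhamel's formula for the $\varphi$-equation yields exponential decay of the linear flow in $L^\infty$, and a routine fixed-point/Gr\"onwall argument extends it to the full nonlinear dynamics.
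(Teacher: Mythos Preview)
Your outline is sound and essentially reconstructs the machinery that the paper merely quotes. The paper gives no self-contained argument here: it observes that, for one ODE coupled with one PDE, Assumption~\ref{ass:LinearStability} together with~\eqref{eq:SecBranch} matches (via \cite[Rem.~2.16]{CMCKS02}) the abstract hypotheses \cite[Assumptions~2.8 and 2.9]{CMCKS02}, then invokes \cite[Prop.~4.2]{CMCKS02} for linear exponential stability in $L^p$ and \cite[Thm.~2.13]{CMCKS02} for the nonlinear upgrade to $L^\infty$. So the paper treats Theorem~\ref{thm:ApplicationSystemStabilityDiscontinuous2} purely as a corollary of the companion work, whereas you are sketching what that companion work actually proves.

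Two places where your sketch would need tightening. In the compactness--contradiction step you tacitly assume the offending eigenvalues $\lambda_n$ remain bounded; this is true (sectorial bounds on $\gamma\Delta_\nu$ together with the uniform $L^\infty$ control on $d+bc/(\lambda-a)$ force it once $\Re\lambda_n>-\eta/2$ keeps $\lambda_n$ away from the range of $a$) but deserves a sentence. More importantly, your final sentence undersells the work: the passage from a spectral gap for $\L_p$ to nonlinear exponential stability in $L^\infty$ is exactly the step that is \emph{not} routine for reaction-diffusion--ODE systems and is the substance of the cited \cite[Thm.~2.13]{CMCKS02}. The ODE component carries no smoothing, so the fractional-power domain $X^\alpha$ of $\L_p$ never embeds into $L^\infty$ in the $\varphi$-coordinate, and the textbook linearised-stability principle for semilinear parabolic equations does not apply off the shelf. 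One really must run the block argument you allude to (analytic decay and $L^p\to L^\infty$ smoothing for $\psi$, then pointwise ODE control of $\varphi$ via Duhamel, then close the loop), and making those estimates close in $L^\infty$ with merely $C^2$ nonlinearities takes genuine care rather than a ``routine Gr\"onwall''.
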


Theorem \ref{thm:ApplicationSystemStabilityDiscontinuous2} can be obtained from 
results in \cite{CMCKS02} in the following way. First, we apply~\cite[Prop.~4.2]{CMCKS02} to linear system \eqref{eq:lin} to show that the stationary solution is linearly exponentially stable in $\Lp{}$. Here, our system consists of one ODE coupled with one PDE, thus Assumption \ref{ass:LinearStability} and the condition $d_0<0$ imply that \cite[Assumptions~2.8 and 2.9]{CMCKS02}  are satisfied by \cite[Rem. 2.16]{CMCKS02}. Next, the nonlinear stability in~$\Li$ is a direct consequence of \cite[Thm. 2.13]{CMCKS02} which proof does not require \rev{from} $\gamma>0$ to be large enough. 

\begin{rem}
	\label{thm:3Branches}
	Other discontinuous stationary solutions can be constructed under \rev{the} following more general version of Assumption \ref{ass:DiscontinuousStationaryTwoBranches}, where we postulate an existence of a set $\V\subseteq \R$ and different branches $ k_1, \cdots,  k_J \in C(\V, \R)$ of solutions to \rev{the} equation $ f(  U, V) = 0$.  Then for \rev{an} arbitrary decomposition 
	\begin{nalign}
		\rev{\Omega \subseteq \overline{\bigcup_{i\in \lbrace 1,\cdots, J \rbrace} \Omega_i}}, \quad \Omega_i \cap \Omega_j =\emptyset \ \ \text{for} \ \ i\neq j \quad \text{and} \quad |\Omega_i| <\delta \ \ \text{for} \ \ i\in\lbrace 2, \cdots, J\rbrace,  
	\end{nalign} 
	we can construct a discontinuous stationary solution of the form
	\begin{nalign}
		\label{Uformn}
		 U (x) = \begin{cases}
			 k_1\big(V(x)\big), \quad x\in \Omega_1, \\
			\qquad \vdots \\
			 k_J\big(V(x)\big), \quad x\in \Omega_J,
		\end{cases}
	\end{nalign} 
	using the same reasoning as in Theorem \ref{DisExBan}. If 
	 \begin{nalign} 
	 	f_{ u}  \big( k_i(\overline{V}), \, \overline{ V} \big)  < 0\quad \text{and} \quad g_v\big(  k_i (\overline{V}), \overline{V}\big) < 0\quad  \text{for each} \quad i\in\lbrace 1, \cdots, J\rbrace
	 \end{nalign} 
	 then this solution is linearly and nonlinearly stable.

\end{rem}
\section{Unstable stationary solutions} 
\label{sec:Instability}

We apply results from Section \ref{sec:Results} 
to particular reaction-diffusion-ODE systems. The following two models with classical nonlinearities have stationary solutions $\UV$, the both regular and discontinuous. For each stationary solution the
autocatalysis condition~\eqref{autocat} is satisfied, \rev{hence such solutions are} unstable.

\subsection{Gray-Scott type model}
First, we deal with the reaction-diffusion-ODE model with the Gray-Scott nonlinearities
\begin{nalign}
	\label{GrayScottODE}
	u_t &=  u^2 v - \alpha u, & x\in \overline{\Omega}, && t>0,\\
	v_t & = \rev{\gamma \Delta_\nu v - u ^2 v + \beta(1-v) }, &\quad x\in {\Omega}, && t>0,
\end{nalign}
with arbitrary constants $\alpha>0$, $\beta>0$ and the diffusion coefficient $\gamma>0$. This is the diffusion degenerate case of the classical Gray-Scott model introduced in  \cite{GRAY19841087} and studied in \textit{e.g.}~\cite{MR2256841, MR1878337} and in the references therein. 

Notice that problem \eqref{GrayScottODE} has the constant \rev{stationary} solution $(\oU_1,\oV_1) = (0,1)$. Two \rev{other} constant solutions $(\oU_2,\oV_2)$, $(\oU_3,\oV_3)$ satisfy the relations
\begin{nalign}
	\label{GSQuadratic}
	\oU &= \frac{\alpha}{\oV} \quad \text{and} \quad 
	0 &=  \oV^2 - \oV + \frac{\alpha^2}{\beta}
\end{nalign}
and they exist if $\alpha^2/\beta < 1/4$.

\begin{theorem}
	\label{GrayScottRegular}
	For a certain choice of coefficients $\alpha, \beta>0$ and for a discrete sequence of diffusion coefficients $\gamma>0$ problem \eqref{GrayScottODE} has a regular stationary solution. All regular stationary solutions {\rm (}not only those obtained via Proposition \ref{thm:reg}{\rm )} are nonlinearly unstable.
\end{theorem}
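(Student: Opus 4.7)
The proof separates into two essentially independent parts: the existence of a non-constant regular stationary solution via Proposition~\ref{thm:reg}, and the nonlinear instability of every such solution via Theorem~\ref{reg:Instability}. The algebraic observation that drives both parts is the factorisation $f(u,v) = u(uv-\alpha)$, which shows that any $C^2$ branch $u=k(v)$ of the level set $\{f=0\}$ must coincide with either $k\equiv 0$ or $k(v)=\alpha/v$ on its domain.

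For the existence part I would linearise at the non-trivial constant stationary solution $(\oU,\oV)=(\alpha/\oV,\oV)$ lying on the second branch, taking $\oV$ to be the smaller root of the quadratic in \eqref{GSQuadratic}, which exists and lies in $(0,1/2)$ provided $4\alpha^2<\beta$. At this point $a_0=\alpha\neq 0$, so Assumption~\ref{ass:a0} holds, and a routine calculation gives
\begin{equation*}
\frac{1}{a_0}\det\begin{pmatrix} a_0 & b_0 \\ c_0 & d_0 \end{pmatrix}=\frac{\alpha^2}{\oV^2}-\beta.
\end{equation*}
Since $\oV<1/2$, one has $\oV^2<\oV(1-\oV)=\alpha^2/\beta$, so the quantity above is strictly positive. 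Condition~\eqref{reg:Det} is therefore satisfied precisely when $\gamma$ belongs to the discrete sequence $\gamma_k:=(\alpha^2/\oV^2-\beta)/\mu_k$ indexed by the positive eigenvalues of $-\Delta_\nu$, and for each such $\gamma$ Proposition~\ref{thm:reg} delivers the claimed non-constant regular solutions of the associated perturbed problem~\eqref{seq1:d}.

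For the instability part, let $(U,V)$ be an arbitrary non-constant regular stationary solution, so $U(x)=k(V(x))$ with $k$ satisfying $k(v)(k(v)v-\alpha)\equiv 0$ on the range of $V$. By continuity, $k$ must equal one of the two branches identified above. If $k\equiv 0$, then $U\equiv 0$ and $V$ weakly solves $\gamma\Delta_\nu V+\beta(1-V)=0$; testing with $V-1$ and integrating by parts yields
\begin{equation*}
\gamma\int_\Omega|\nabla V|^2\dx+\beta\int_\Omega(V-1)^2\dx=0,
\end{equation*}
forcing $V\equiv 1$, contrary to non-constancy. On the remaining branch $U(x)=\alpha/V(x)$ one obtains the pointwise autocatalytic sign $f_u(U(x),V(x))=2\alpha-\alpha=\alpha>0$ throughout $\oo$, and the first alternative of Theorem~\ref{reg:Instability} produces nonlinear instability.

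I do not anticipate any serious technical difficulty; the main check is the sign of $\det/a_0$ at the smaller root of \eqref{GSQuadratic}, which is elementary, and the instability step is immediate once the branches of $\{f=0\}$ are enumerated. The only interpretive point that deserves care is the statement's phrase ``discrete sequence of $\gamma$'': I read it as the sequence $\{\gamma_k\}$ indexed by the spectrum of $-\Delta_\nu$, along each element of which Proposition~\ref{thm:reg} triggers a bifurcation of non-constant regular solutions from the constant state $(\oU,\oV)$.
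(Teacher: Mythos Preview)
Your proof is correct and follows the paper's line closely. The existence argument is identical: linearise at the constant state $(\oU_3,\oV_3)=(\alpha/\oV_3,\oV_3)$ with $\oV_3<1/2$, verify $a_0=\alpha>0$ and $\det/a_0=(\alpha/\oV_3)^2-\beta>0$, then choose $\gamma$ along the sequence $\gamma_k$ determined by the eigenvalues of $-\Delta_\nu$. For the instability part there is a minor but genuine difference: the paper simply records that $f_u\UVx\in\{\alpha,-\alpha\}$ depending on the branch and then invokes Theorem~\ref{reg:Instability} directly, which tacitly relies on the second (convex-domain) alternative when $U\equiv 0$. You instead eliminate that branch by an energy identity, showing that $U\equiv 0$ forces $V\equiv 1$, so every \emph{non-constant} regular solution lies on $U=\alpha/V$ and only the first alternative of Theorem~\ref{reg:Instability} is needed. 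This is slightly sharper, since it dispenses with any convexity hypothesis on $\Omega$.
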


\begin{proof}
	We choose $\alpha,\beta >0$ such that $\alpha^2/\beta < 1/4$ and consider a solution $\oV_3$ of the quadratic equation in \eqref{GSQuadratic} satisfying $\overline{ V}_3<1/2$.
	We apply Proposition \ref{thm:reg} with the constant stationary solution $(\oU,\oV)=(\oU_3,\oV_3)$, where $\oU_3 = \alpha /\oV_3$. Assumption~\ref{ass:a0} holds true, because $a_0 = f_u (\oU_3, \oV_3) = \alpha>0$.	Moreover, for the numbers $a_0, b_0, c_0, d_0$ as in \eqref{ass:Matrix}, we have 
	\begin{nalign}\rev{
		\det\begin{pmatrix}
			a_0 &b_0 \\ c_0 & d_0
		\end{pmatrix} =  \begin{vmatrix}
			\alpha & (\alpha/\overline{V}_3)^2 \\ - 2\alpha & - (\alpha/\overline{  V}_3)^2 - \beta 
		\end{vmatrix} =\alpha\left((\alpha/\overline{V}_3)^2 - {\beta}\right) > 0},
	\end{nalign}   
	where the last inequality follows \rev{from} an explicit formula for $\overline{ V}_3$. Thus, we may choose $\gamma>0$ to satisfy equation \eqref{reg:Det} for some eigenvalue $\mu_k>0$.
	
	Next, we show that every stationary solution satisfies $f_u\big(U(x),V(x)\big) \neq0$ for each $x\in \Omega$. Indeed, notice that either $U(x) = 0$ for all~$x\in \oo$ or $U(x) = \alpha /V(x)$ for all~$x\in \oo$. Thus, since, $f_u(U,V) = 2UV - \alpha$ we have either $f_u\big(U(x),V(x)\big) = \alpha$ or $f_u\big(U(x),V(x)\big) = -\alpha$. 
	
	\rev{Hence,} by \rev{Theorem \ref{reg:Instability},} we conclude that all regular solutions to problem \eqref{GrayScottODE} are unstable. 
\end{proof}

\begin{theorem}
	For arbitrary $\alpha>0$ and $\beta>0$ and for each diffusion \rev{coefficient $\gamma>0$,} except of a discrete set, there exists a family of discontinuous stationary solutions to problem \eqref{GrayScottODE}. All discontinuous stationary solutions {\rm (}not only those obtained via Theorem \ref{DisExBan}{\rm )} are linearly unstable in $\Lp{}^2$ for each $p\in (1,\infty)$.
\end{theorem}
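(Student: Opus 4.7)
The plan is to invoke Theorem~\ref{DisExBan} for existence, anchored at the constant solution $(\oU,\oV)=(0,1)$, which exists for all $\alpha,\beta>0$. The equation $f(U,V)=U(UV-\alpha)=0$ splits into two smooth branches $k_1(v)\equiv 0$ and $k_2(v)=\alpha/v$ on any open interval about $v=1$ that avoids the origin, with $\oU=k_1(1)=0\neq\alpha=k_2(1)$, so Assumption~\ref{ass:DiscontinuousStationaryTwoBranches} is satisfied. A direct computation yields a diagonal linearization matrix at $(0,1)$ with $a_0=-\alpha$ and $d_0=-\beta$, so Assumption~\ref{ass:a0} holds, and
\[
\frac{1}{a_0}\det\begin{pmatrix} a_0 & b_0 \\ c_0 & d_0 \end{pmatrix}=-\beta<0\leqslant \gamma\mu_k
\]
for every eigenvalue $\mu_k\geqslant 0$ of $-\Delta_\nu$, so Assumption~\ref{ass:StatSol} holds. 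Theorem~\ref{DisExBan} then produces a family of discontinuous stationary solutions of the form \eqref{Uform}, indexed by admissible decompositions $\Omega=\Omega_1\cup\Omega_2$ with $|\Omega_2|$ sufficiently small.

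For instability, let $(U,V)$ be \emph{any} weak discontinuous stationary solution. Since $f(U(x),V(x))=U(x)\bigl(U(x)V(x)-\alpha\bigr)=0$ for almost every $x$, we have $U(x)\in\{0,\alpha/V(x)\}$ pointwise a.e. I would first argue that $V$ is continuous and strictly positive on $\oo$: elliptic regularity applied to the second equation of \eqref{DisProbDef} with $U^2\in L^\infty(\Omega)$ treated as a bounded coefficient gives $V\in C(\oo)$, and the weak maximum principle together with $\partial_\nu V=0$ applied to $-\gamma\Delta V+(U^2+\beta)V=\beta$ yields a uniform lower bound $V\geqslant \beta/(\|U\|_\infty^2+\beta)>0$. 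Consequently, if $U=0$ a.e.\ then $V$ solves $\gamma\Delta_\nu V+\beta(1-V)=0$, forcing $V\equiv 1$, so $(U,V)$ is constant; and if $U=\alpha/V$ a.e.\ then $U$ agrees almost everywhere with the continuous function $\alpha/V$, so $(U,V)$ is not truly discontinuous. Hence for a genuinely discontinuous solution the set $\{x : U(x)V(x)=\alpha\}$ has positive Lebesgue measure, and on this set $f_u(U,V)=2UV-\alpha=\alpha>0$. The autocatalysis condition \eqref{autocat} is therefore satisfied, and Theorem~\ref{thm:InstabilityAutocata} yields linear instability of $(U,V)$ in $L^p(\Omega)^2$ for every $p\in(1,\infty)$.

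The main technical point in this plan is the continuity and strict positivity of $V$ used to dismiss the two ``pure branch'' possibilities; this rests on a standard Neumann maximum-principle argument combined with $W^{2,p}$-regularity from the second equation. Everything else is the algebraic verification at the constant solution $(0,1)$ and a routine application of the theorems collected in Section~\ref{sec:Results}; in particular, the exceptional set of $\gamma$ promised in the statement turns out, for this choice of constant solution, to be empty, which is consistent with (and slightly stronger than) the phrasing of the theorem.
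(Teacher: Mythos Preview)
Your proof is correct and follows the same overall strategy as the paper: invoke Theorem~\ref{DisExBan} with the two branches $k_1\equiv 0$ and $k_2(v)=\alpha/v$ for existence, and then Theorem~\ref{thm:InstabilityAutocata} via $f_u=2UV-\alpha=\alpha>0$ on the $k_2$-branch for instability. Two points where you go slightly further than the paper are worth noting. First, by anchoring existence only at $(\oU_1,\oV_1)=(0,1)$ you compute $a_0^{-1}\det=-\beta<0\leqslant\gamma\mu_k$, so Assumption~\ref{ass:StatSol} holds for \emph{every} $\gamma>0$ and the exceptional set is empty; the paper instead works with all three constant solutions simultaneously and is content with the generic statement that Assumption~\ref{ass:StatSol} fails only on a discrete set of $\gamma$. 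Second, the paper simply asserts that a discontinuous solution uses $k_2$ on a set of positive measure, whereas you supply the argument (continuity and strict positivity of $V$ from $W^{2,p}$-regularity and the Neumann maximum principle, ruling out the two pure-branch cases). Both additions are sound and make the argument more self-contained, but the route is the same.
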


\begin{proof}
	First, we apply Theorem \ref{DisExBan} to construct discontinuous stationary solutions to  problem \eqref{GrayScottODE}. Assumption \ref{ass:StatSol} is satisfied for \rev{all stationary points} $(\overline{   U}_1, \overline{ V}_1)$, $(\overline{   U}_2, \overline{ V}_2)$, $(\overline{   U}_3, \overline{ V}_3)$ \rev{and for all diffusion coefficients $\gamma>0$, possibly except of  a discrete set}. To check Assumption \ref{ass:DiscontinuousStationaryTwoBranches}, we notice that the equation \rev{${U}(U V - \alpha)=0$} has \rev{the following} two branches of solutions
	\begin{nalign}
		U = k_1(V) =  0 \qquad \text{and} \qquad U = k_2(V) = \frac{\alpha}{V} \quad \text{for all} \quad V\neq 0.
	\end{nalign}
	\rev{Thus, for an arbitrary open set $\Omega_1 \subset \Omega$, with $\Omega_2=\Omega\setminus\overline{\Omega}_1$} and such that the measure $|\Omega_2|>0$ is sufficiently small,  there exist a family of discontinuous stationary solutions around 
	the points  either $(\oU_1, \oV_1)$ or $(\oU_2, \oV_2)$ or $(\oU_3, \oV_3)$ as stated in Theorem~\ref{DisExBan}. 
	
	Each discontinuous stationary solution $\UV$ satisfies $k_2\big(V(x)\big) = \alpha/V(x)$ for $x$ from a set of a positive measure. Thus, $f_u\big(k_2(V(x)),V(x)\big) = \alpha>0$ and this solution is linearly unstable in $\Lp{}^2$ for each $p\in(1,\infty)$  by Theorem~\ref{thm:InstabilityAutocata}.
\end{proof}

\subsection{Brusselator type model}
Next, we deal with the reaction-diffusion-ODE model with the nonlinearity  as in the Brusselator system
\begin{nalign}
	\label{BrusselatorODE}
	u_t &=  \alpha + u^2 v - (\beta+1)u, & x \in \oo, && t>0, \\
	v_t &= \gamma\Delta_\nu v + \beta u - u^2v, & x\in \Omega, && t>0, 	
\end{nalign}
with arbitrary constants $\alpha>0$, $\beta>0$ and the diffusion coefficient $\gamma>0$. This is the diffusion-degenerate case of the classical Brusselator model 
which was introduced in~\cite{PL68}  as a model for
an autocatalytic oscillating chemical reaction. 
 It has been
shown ({\it e.g.} \rev{in \cite{MR722938,MR2200795,MR1869203} and in} references therein) that the Brusselator system with the both nonzero diffusion coefficients (as in \eqref{RD:gen})
exhibits the Turing instability.

Notice that a constant stationary solution  $(\overline{U}, \overline{  V}) \in \R^2$ of problem \eqref{BrusselatorODE} satisfies the equations
\begin{nalign}
	\label{BrusselatorStat}
	0 = \alpha + \oU^2\oV - (\beta+1)\oU, \qquad 0= \beta\oU - \oU^2\oV
\end{nalign}
with the only solution 
\begin{nalign}
	(\oU_1, \oV_1) = (\alpha, \beta/\alpha).
\end{nalign} 

\begin{theorem}
	For each $\alpha>0$ and $\beta>1$ and for a discrete sequence of \rev{the} diffusion coefficients $\gamma>0$ problem \eqref{BrusselatorODE} has a regular stationary solution. All regular stationary solutions {\rm (}not only those obtained via Proposition \ref{thm:reg}{\rm )} are nonlinearly unstable.
\end{theorem}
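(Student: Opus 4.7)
The plan is to mirror the Gray–Scott argument: existence follows from Proposition \ref{thm:reg} applied at the unique constant stationary solution, and instability from an analysis of the branches of the algebraic equation $f = 0$. First I would take $(\oU_1, \oV_1) = (\alpha, \beta/\alpha)$ and compute
\[
a_0 = 2\oU_1\oV_1 - (\beta+1) = \beta-1, \quad b_0 = \alpha^2, \quad c_0 = -\beta, \quad d_0 = -\alpha^2.
\]
The hypothesis $\beta > 1$ gives $a_0 \ne 0$, verifying Assumption \ref{ass:a0}, and a short computation produces $(a_0 d_0 - b_0 c_0)/a_0 = \alpha^2/(\beta-1) > 0$. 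Condition \eqref{reg:Det} then reduces to $\gamma \mu_k = \alpha^2/(\beta-1)$, satisfied along the discrete sequence $\gamma_k = \alpha^2/\bigl((\beta-1)\mu_k\bigr)$ indexed by the positive eigenvalues of $-\Delta_\nu$, and Proposition \ref{thm:reg} supplies the desired regular non-constant stationary solutions.

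For the instability, I would first solve the quadratic $V U^2 - (\beta+1) U + \alpha = 0$ to obtain the two $C^2$ branches
\[
k_{1,2}(V) = \frac{(\beta+1) \pm \sqrt{(\beta+1)^2 - 4\alpha V}}{2V}
\]
on the half-line $V \le (\beta+1)^2/(4\alpha)$ (with $k_2$ extending continuously through $V = 0$ to $\alpha/(\beta+1)$), together with the key identity $f_u\bigl(k_i(V),V\bigr) = \pm\sqrt{(\beta+1)^2 - 4\alpha V}$. Since the two branches are disjoint away from the turning point, any regular solution $U(x) = k(V(x))$ must lie on a single branch. On the $k_1$ branch $f_u \ge 0$, vanishing only where $V$ equals the turning value; the alternative $V \equiv (\beta+1)^2/(4\alpha)$ forces the constant joining point, so every non-constant regular solution on $k_1$ satisfies $f_u(U(x_0), V(x_0)) > 0$ at some $x_0$ and the first case of Theorem \ref{reg:Instability} yields nonlinear instability.

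The main obstacle is the $k_2$ branch, where $f_u \le 0$ and neither case of Theorem \ref{reg:Instability} applies without assuming $\Omega$ convex. I would sidestep this by ruling out non-constant regular solutions on $k_2$ altogether. The identity $k_2(V) V = \tfrac{1}{2}\bigl((\beta+1) - \sqrt{(\beta+1)^2 - 4\alpha V}\bigr) \le (\beta+1)/2$ yields $\beta - k_2(V)V \ge (\beta-1)/2 > 0$ for $\beta > 1$, while a direct sign check shows $k_2(V) > 0$ throughout its domain, so $g\bigl(k_2(V),V\bigr) = k_2(V)\bigl(\beta - k_2(V)V\bigr) > 0$ uniformly. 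Integrating the elliptic equation $\gamma\Delta_\nu V + g\bigl(k_2(V),V\bigr) = 0$ against $1$ under the Neumann boundary condition then forces $\int_\Omega g\bigl(k_2(V),V\bigr)\dx = 0$, a contradiction; assembling the two branch cases completes the proof.
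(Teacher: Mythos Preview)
Your existence argument is correct and coincides with the paper's: both compute $a_0=\beta-1$ and $\det\bigl(\begin{smallmatrix}a_0&b_0\\c_0&d_0\end{smallmatrix}\bigr)=\alpha^2$, then invoke Proposition~\ref{thm:reg}.

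For instability the two arguments diverge. The paper proceeds exactly as in the Gray--Scott case: one checks that on each branch $f_u\bigl(k_i(V),V\bigr)=\pm\sqrt{(\beta+1)^2-4\alpha V}$ is of constant nonzero sign, and then appeals to Theorem~\ref{reg:Instability}. This handles the branch $k_1$ immediately via the first alternative ($f_u>0$ somewhere), but on $k_2$ it falls under the second alternative, which carries the hypothesis that $\Omega$ be convex --- a hypothesis the paper does not state for the Brusselator theorem. Your treatment of $k_2$ is genuinely different: using $\beta>1$ you show $g\bigl(k_2(V),V\bigr)=k_2(V)\bigl(\beta-k_2(V)V\bigr)>0$ on the whole branch and then integrate the elliptic equation against~$1$ to rule out any non-constant regular solution on $k_2$ outright. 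This sidesteps the convexity requirement entirely, so your argument proves the instability claim for every bounded $C^2$ domain, whereas the paper's terse invocation of Theorem~\ref{reg:Instability} leaves the non-convex case on $k_2$ formally unaddressed. The cost is a short extra computation; the gain is a cleaner, hypothesis-free conclusion.
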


\begin{proof}
	We use the notation from \eqref{ass:Matrix} with the constant stationary solution $(\oU_1,\oV_1)$ and we notice that 
	\begin{nalign}
		a_0 = \beta - 1 \quad \text{and} \quad \det\begin{pmatrix}
			a_0 & b_0 \\ c_0 & d_0
		\end{pmatrix} = \alpha^2.
	\end{nalign}
	Thus, for \rev{$\beta>1$,} the proof of \rev{the existence of regular stationary solutions,} may be completed in the analogous way as the proof of Theorem \ref{GrayScottRegular}. \rev{All regular stationary solutions are unstable by Theorem \ref{reg:Instability}.}
\end{proof}

\begin{theorem}
	For \rev{some} $\alpha>0$, $\beta>0$, $\beta\neq 1$, and for each diffusion coefficient $\gamma>0$ except of a discrete set, there exists a family of discontinuous stationary solutions to problem \eqref{BrusselatorODE}. All discontinuous stationary solutions {\rm (}not only those obtained via Theorem \ref{DisExBan}{\rm )} are linearly unstable in $\Lp{}^2$ for each $p\in (1,\infty)$.
\end{theorem}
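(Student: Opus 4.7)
The plan is to mirror the Gray--Scott argument already carried out above: first invoke Theorem \ref{DisExBan} with the unique constant equilibrium $(\oU_1,\oV_1)=(\alpha,\beta/\alpha)$ of \eqref{BrusselatorStat} to produce the claimed family of discontinuous weak stationary solutions, then apply Theorem \ref{thm:InstabilityAutocata} after verifying that the autocatalysis condition \eqref{autocat} must hold on a set of positive measure for \emph{every} discontinuous weak stationary solution of \eqref{BrusselatorODE}.

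For the existence part I would verify the three hypotheses of Theorem \ref{DisExBan} at $(\oU_1,\oV_1)$. A short computation gives $a_0=f_u(\oU_1,\oV_1)=2\alpha(\beta/\alpha)-(\beta+1)=\beta-1$, so Assumption \ref{ass:a0} reduces to $\beta\neq 1$, while Assumption \ref{ass:StatSol} excludes only the discrete set of $\gamma>0$ with $\gamma\mu_k=\alpha^2/(\beta-1)$ for some eigenvalue $\mu_k$ of $-\Delta_\nu$. For Assumption \ref{ass:DiscontinuousStationaryTwoBranches}, the equation $f(U,V)=VU^2-(\beta+1)U+\alpha=0$ is quadratic in $U$, with branches
\begin{nalign}
k_{\pm}(V)=\frac{(\beta+1)\pm\sqrt{(\beta+1)^2-4\alpha V}}{2V}
\end{nalign}
that are smooth on a neighborhood $\V$ of $\oV_1=\beta/\alpha$, because the discriminant there equals $(\beta+1)^2-4\beta=(\beta-1)^2>0$. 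The two roots at $\oV_1$ are $\alpha$ and $\alpha/\beta$, so they are distinct and one of them equals $\oU_1$. Theorem \ref{DisExBan} then yields the claimed family of discontinuous solutions for every admissible $\gamma$ and every choice of $\Omega_1\subset\Omega$ with $|\Omega_2|$ sufficiently small.

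For the instability part, the key algebraic identity is that any pair with $f(U,V)=0$ satisfies $2UV-(\beta+1)=\pm\sqrt{(\beta+1)^2-4\alpha V}$, where the sign is determined by the branch on which $U$ lies; in particular
\begin{nalign}
f_u(U(x),V(x))=\pm\sqrt{(\beta+1)^2-4\alpha V(x)}.
\end{nalign}
Since elliptic regularity applied to \eqref{eq:DinsontinousSolutionsVEquation} places $V$ in $W^{2,p}(\Omega)\hookrightarrow C(\oo)$ for $p$ large, a genuinely discontinuous $U$ cannot coincide with a single continuous composition $k_\pm\circ V$ on all of $\Omega$, so both sets $\{x:U(x)=k_\pm(V(x))\}$ have positive Lebesgue measure. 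On the set where $U=k_+(V)$ one therefore has $f_u(U,V)=+\sqrt{(\beta+1)^2-4\alpha V}>0$, which triggers the autocatalysis condition \eqref{autocat}, and Theorem \ref{thm:InstabilityAutocata} delivers linear instability in $L^p(\Omega)^2$ for every $p\in(1,\infty)$.

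The delicate point I expect to need most care is the universal instability claim, which covers arbitrary discontinuous stationary solutions and not only the ones manufactured by Theorem \ref{DisExBan}. The only conceivable obstruction is the collapse locus $\{x:V(x)=(\beta+1)^2/(4\alpha)\}$, where the discriminant vanishes, the two branches merge, and $f_u=0$; however, on that locus $k_+(V)=k_-(V)$, so $U$ is in fact continuous across it, and the genuine jump set of $U$ must therefore lie in the open set where the discriminant is strictly positive. This keeps the autocatalysis inequality strict on a positive-measure subset of $\{U=k_+(V)\}$, making the application of Theorem \ref{thm:InstabilityAutocata} legitimate.
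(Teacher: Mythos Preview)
Your proposal is correct and follows essentially the same approach as the paper: verify Assumptions \ref{ass:a0}, \ref{ass:StatSol}, \ref{ass:DiscontinuousStationaryTwoBranches} at $(\oU_1,\oV_1)=(\alpha,\beta/\alpha)$ using the explicit quadratic branches $k_\pm$, then apply Theorem \ref{thm:InstabilityAutocata} via the identity $f_u\big(k_\pm(V),V\big)=\pm\sqrt{(\beta+1)^2-4\alpha V}$. The paper's proof is terser—it simply asserts that any discontinuous stationary solution uses the branch $k_1=k_+$ on a set of positive measure—whereas you supply the missing justification (continuity of $V$ via elliptic regularity forces a genuinely discontinuous $U$ to visit both branches) and you also address the degenerate locus where the discriminant vanishes; these additions are correct and make the universal instability claim more self-contained than the paper's own argument.
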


\begin{proof}
	We choose \rev{the} coefficient $\alpha>0$, $\beta>0$ and $\gamma>0$ \rev{in such a way that} Assumptions \ref{ass:a0} and~\ref{ass:StatSol} are satisfied (notice that $a_0  = \beta - 1 \neq 0$). The equation 
	\begin{nalign}
		f(U,V) = \alpha + U^2V - (\beta+1)U = 0
	\end{nalign}
	has two different branches of solutions given by the explicit formulas
	\begin{nalign}
		U= k_1(V) = \frac{\beta + 1 + \sqrt{(\beta+1)^2 - 4\alpha V}}{2V}, \quad U= k_2(V) = \frac{\beta + 1 - \sqrt{(\beta+1)^2 - 4\alpha V}}{2V}
	\end{nalign}
	for $V < (\beta+1)^2/(4\alpha)$. Note that 
	\begin{nalign}
		\oV_1 = \beta/\alpha < (\beta+1)^2/(4\alpha) \quad \text{and} \quad  \oU_1 = k_1(\oV_1).
	\end{nalign}
	 \rev{Thus, for an  open set $\Omega_1 \subset \Omega$ with $\Omega_2= \Omega \setminus \overline{\Omega}_2$ and such that the measure $|\Omega_2|>0$ is} sufficiently small,  there exist a family of discontinuous stationary solutions around \rev{points} $(\oU_1, \oV_1)$ and $(k_2(\oV_1), \oV_1)$ as stated in Theorem~\ref{DisExBan}.
	
	Each discontinuous stationary solution $\UV$ satisfies $U(x) =  k_1\big(V(x)\big)$ for $x$ from a set of positive measure. Thus, 
	\begin{nalign}
		f_u\big(k_1(V(x)),V(x)\big) = \sqrt{(\beta+1)^2 - 4\alpha V(x)}>0
	\end{nalign} 
	and this solution is linearly unstable in $\Lp{}^2$ for each $p\in(1,\infty)$  by Theorem~\ref{thm:InstabilityAutocata}.
\end{proof}

\section{Stable discontinuous stationary solutions}
\label{sec:stable}

Now, we discuss reaction-diffusion-ODE models with stable discontinuous stationary solutions. 

\subsection{Oregonator type model} 
The following reaction-diffusion-ODE model 
\begin{nalign}
	\label{OregonatorODE}
	u_t &= u - u^2 + \alpha v\frac{\beta-u}{\beta+u}, & \rev{x\in\oo,}  && t>0,\\
	v_t &= \gamma \Delta_\nu v + u - v, & x\in \Omega, && t>0,
\end{nalign}
contains the nonlinearities as in the classical Oregonator system with the parameters $\alpha>0$, $\beta>0$ and the diffusion coefficient $\gamma>0$. A general version of this model was introduced in \cite{doi:10.1063/1.440418,doi:10.1063/1.1681288,Field1972OscillationsIC} to describe the FKN-Belousov reaction. It is known that system \eqref{OregonatorODE} may exhibit the Turing instability for a certain choice of parameters and diffusion coefficients, see \textit{e.g.}~\cite{ZHOU2016192, PENG20102337} and the references therein.

System \eqref{OregonatorODE} has the zero constant solution $(\oU_1, \oV_1) = (0,0)$ and two other constant solutions have the form 
\begin{nalign}
	(\oU_2, \oV_2) = (\oU_2,\oU_2) \qquad (\oU_3, \oV_3) = (\oU_3, \oU_3),
\end{nalign}
where \rev{the numbers} $\oU_2$ and $\oU_3$ satisfy the quadratic equation
\begin{nalign}
	\oU^2 +\oU(\beta+\alpha-1)-\beta(\alpha+1) = 0.
\end{nalign}
Fig.~\ref{fig:Orego} presents a location of the  nullclines defined by the following equation for $\beta>1$ and for sufficiently small $\alpha>0$: 
\begin{nalign}
	\label{eq:OregNullclines}
	f(U,V) = U - U^2 + \alpha V \frac{\beta -U}{\beta+U} = 0 \quad \text{and} \quad g(U,V) = U-V = 0.
\end{nalign}

\begin{figure}[h]
	\includegraphics[width=0.7\linewidth]{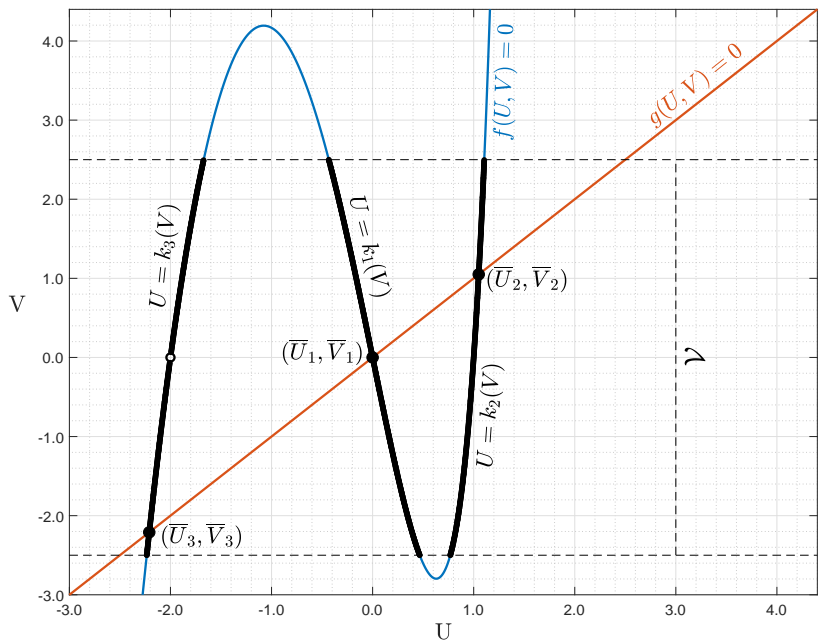}	
	\caption{The nullclines \eqref{eq:OregNullclines} for the Oregonator model with $\beta=2$ and sufficiently small $\alpha>0$. The point $(-2,0)$ is excluded because it does not satisfy the equation $f(U,V) = 0$.}
	\label{fig:Orego}
\end{figure}

\begin{theorem}
	\label{prop:OregEx}
	\rev{For each $\beta>1$ and sufficiently small $\alpha>0$} there exist a family of discontinuous stationary solutions to problem \eqref{OregonatorODE}.
\end{theorem}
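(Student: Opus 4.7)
The plan is to verify the hypotheses of Theorem \ref{DisExBan} at the constant stationary solution of \eqref{OregonatorODE} that bifurcates continuously from $(1,1)$ as $\alpha \to 0^+$, and then read off the discontinuous solutions. Setting $g(U,V)=U-V=0$ forces $V=U$, and then $f(U,U)=0$ becomes $U\bigl[U^2+(\beta+\alpha-1)U-\beta(\alpha+1)\bigr]=0$; for $\beta>1$ the quadratic factor has, by continuity from $\alpha=0$, a positive root $\oU_2=1+O(\alpha)$ for all sufficiently small $\alpha>0$, and I set $\oV_2=\oU_2$.

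Next I would construct the branches of $f(\cdot,V)=0$. Multiplying $f$ by $(\beta+U)$ converts $f(U,V)=0$ into a cubic in $U$ whose $\alpha=0$ form factors as $-U(U-1)(U+\beta)$; since $\beta>1$, the three roots $0,1,-\beta$ are pairwise distinct. The implicit function theorem applied at each simple root then produces three $C^2$-branches $k_1,k_2,k_3$ on a common open neighborhood $\V\subseteq\R$ of $\oV_2$, with $k_1(V)$ near $1$, $k_2(V)$ near $0$ and $k_3(V)$ near $-\beta$. By uniqueness $\oU_2=k_1(\oV_2)$, and $k_1(\oV_2)\neq k_2(\oV_2)$, which verifies Assumption \ref{ass:DiscontinuousStationaryTwoBranches}.

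To check the remaining hypotheses, a direct differentiation at $(\oU_2,\oV_2)$ gives, in the notation \eqref{ass:Matrix}, $a_0=-1+O(\alpha)$, $b_0=O(\alpha)$, $c_0=1$ and $d_0=-1$; hence $a_0\neq 0$ (Assumption \ref{ass:a0}) and
\begin{equation*}
\frac{1}{a_0}\det\begin{pmatrix}a_0 & b_0 \\ c_0 & d_0\end{pmatrix}=-1+O(\alpha)<0\leqslant\gamma\mu_k
\end{equation*}
for every eigenvalue $\mu_k$ of $-\Delta_\nu$ and every $\gamma>0$, giving Assumption \ref{ass:StatSol}. Theorem \ref{DisExBan} applied with $k_1,k_2$ then yields, for every open $\Omega_1\subset\Omega$ with $\Omega_2=\Omega\setminus\overline{\Omega}_1$ of sufficiently small positive Lebesgue measure, a weak stationary solution of the form \eqref{Uform} clustering near $(\oU_2,\oV_2)$ on $\Omega_1$ and near $\bigl(k_2(\oV_2),\oV_2\bigr)\approx(0,1)$ on $\Omega_2$; varying $\Omega_1$ produces the asserted family.

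The only mildly delicate step is the branch construction: one needs all the relevant branches defined on a common neighborhood of $\oV_2$ and \emph{uniformly} separated there, so that they can be pasted into a single weak solution. This is made routine by the explicit factorization at $\alpha=0$, whose roots are separated precisely when $\beta>1$, after which the implicit function theorem extends all three branches smoothly to a common $\V$ for $\alpha$ small; no further obstacle is expected.
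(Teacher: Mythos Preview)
Your argument is correct and self-contained; the route is, however, somewhat different from the paper's.

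The paper works geometrically from the nullcline picture: it appeals to Fig.~\ref{fig:Orego} to assert the existence of three branches $k_1,k_2,k_3$ on a common $\V$ containing all of $\oV_1,\oV_2,\oV_3$, and it verifies Assumption~\ref{ass:a0} indirectly through the identity $f_u\big(k(V),V\big)k'(V)+f_v\big(k(V),V\big)=0$ together with $f_v(\oU_i,\oV_i)\neq0$ and $k'(\oV_i)\neq0$. It then catalogues all admissible pairings of a constant state $(\oU_i,\oV_i)$ with a second branch, obtaining several families at once, but it does not verify Assumption~\ref{ass:StatSol} explicitly: it simply restricts to those $\gamma>0$ for which it holds. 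Your approach is perturbative rather than pictorial: you factor the cubic $(\beta+U)f(U,V)$ at $\alpha=0$ to get the branches analytically and compute $a_0,b_0,c_0,d_0$ by expansion in $\alpha$. This buys you something the paper does not state, namely that $\tfrac{1}{a_0}\det\begin{smallmatrix}a_0&b_0\\c_0&d_0\end{smallmatrix}=-1+O(\alpha)<0$, so Assumption~\ref{ass:StatSol} holds for \emph{every} $\gamma>0$ rather than ``all but a discrete set''. On the other hand you only produce the single family around $(\oU_2,\oV_2)$ with $k_1,k_2$, whereas the paper lists further families (and uses Remark~\ref{thm:3Branches} to involve three branches simultaneously); since the theorem only asserts existence of \emph{a} family, your narrower conclusion suffices.

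Two small remarks. First, your branch $k_3$ near $-\beta$ is a root of the multiplied cubic and only a zero of $f$ away from $U=-\beta$; you never actually use $k_3$, so this is harmless, but the wording of your last paragraph could mislead. Second, your labelling $k_1\approx1$, $k_2\approx0$ is the reverse of the paper's, which is purely cosmetic.
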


\begin{proof}
	\rev{Fix $\beta>1$ and choose sufficiently small $\alpha>0$ to have the nullclines as in Fig.~\ref{fig:Orego}. In such a case, we have an open subset $\V\subset \R$ satisfying $V_1, V_2, V_3 \in \V$ and three branches $k_1, k_2, k_3$ defined on $\V$}.
	
	Here, we check assumptions of Theorem \ref{DisExBan} for the stationary points either $(\oU_1, \oV_1)$ or $(\oU_2, \oV_2)$ or $(\oU_3, \oV_3)$ in the cases shown in Fig. \ref{fig:Orego}. \rev{First,} we notice that for \rev{a branch} $k\in \lbrace k_1, k_2, k_3\rbrace$, we have
	\begin{nalign}
		\label{eq:DerIden}
		\frac{d}{dV}f\big(k(V), V\big) = f_u\big(k(V), V\big)k'(V) + f_v\big(k(V), V\big) = 0.
	\end{nalign}
	Since $$f_v(U,V) = \frac{\alpha(\beta - U)}{\beta + U},$$ choosing \rev{the} constant stationary solution \rev{$(\oU_i,\oV_i)\in\R^2$} satisfying $\oU\neq \beta$ and $k'(\oV) \neq 0$ we obtain $a_0 = f_u(\oU,\oV) \neq 0 $ as required in Assumption \ref{ass:a0}. This is the case of all constant stationary solutions $(\oU_1, \oV_1)$, $(\oU_2, \oV_2)$ and $(\oU_3, \oV_3)$ shown in Fig. \ref{fig:Orego}.
	
	To apply Theorem \ref{DisExBan}, we need a constant stationary solution $(\oU_i,\oV_i)$, an open set $\V\subset \R$ with $V_i \in \V$ and two branches of solutions $k_i,k_j\in C(\V,\R)$ to equation $f(U,V) = 0$. On Fig \ref{fig:Orego}, we may choose each constant stationary solution with corresponding branches, namely, \rev{the point} $(\oU_1, \oV_1)$ with the branches $k_1$ and  $k_2$, the point $(\oU_2, \oV_2)$ with the branches $k_2$ and either $k_1$ or $k_3$ as well as \rev{the point} $(\oU_3, \oV_3)$ with the branches $k_3$ and either $k_1$ or $k_2$. We do not choose the point $(\oU_1, \oV_1)$ with the branch $k_3$ in order to avoid the singular point $(-\beta, 0)$.
	
	Thus, for $\gamma>0$ satisfying Assumption \ref{ass:StatSol}, \rev{an arbitrary open set $\Omega_1 \subset \Omega$ with $\Omega_2 = \Omega\setminus \overline{\Omega}_1$} and $|\Omega_2|>0$ sufficiently small,  there exist a family of discontinuous stationary solutions around \rev{the points} $(\oU, \oV) = (k_i(\oV), \oV)$ and $(k_j(\oV), \oV)$ as stated in Theorem~\ref{DisExBan}. 
	
	By Remark \ref{thm:3Branches}, we \rev{can also construct} discontinuous stationary solutions with all three branches $k_1$, $k_2$, $ k_3$ and \rev{with an} arbitrary partition $\Omega_1$, $\Omega_2$, $\Omega_3$ of $\Omega$. 
\end{proof}

\begin{theorem}
	\label{thm:OregonatorStable}
	\rev{For each $\beta>1$ and {for} sufficiently small $\alpha>0$} there exist  discontinuous stationary solutions constructed around the points $(\oU_2,\oV_2)$ and $\big(k_3(\oV_2),\oV_2\big)$ which are stable. 
\end{theorem}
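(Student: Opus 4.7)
The plan is to apply Theorem~\ref{thm:ApplicationSystemStabilityDiscontinuous2} to the discontinuous stationary solution constructed in Theorem~\ref{prop:OregEx} around the constant solution $(\oU_2,\oV_2)$ and built from the two branches $k_2$ (with $k_2(\oV_2)=\oU_2$) and $k_3$. The existence of the solution and all hypotheses of Theorem~\ref{DisExBan} are already supplied by Theorem~\ref{prop:OregEx}, so the remaining tasks are to verify Assumption~\ref{ass:LinearStability} at $(\oU_2,\oV_2)$ together with the two sign conditions in \eqref{eq:SecBranch} at $(k_3(\oV_2),\oV_2)$, both for $\alpha>0$ sufficiently small.

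The first step is to pin down the asymptotic location of the relevant points as $\alpha\to 0^+$. At $\alpha=0$ the quadratic $\oU^2+\oU(\beta+\alpha-1)-\beta(\alpha+1)=0$ factors as $(\oU-1)(\oU+\beta)=0$, so by continuity $(\oU_2,\oV_2)\to(1,1)$ as $\alpha\to 0^+$. Rewriting $f(U,V)=0$ as the cubic $U(U-1)(\beta+U)=\alpha V(\beta-U)$ in $U$ and applying the implicit function theorem at the root $U=-\beta$, $V=0$ yields the expansion
\begin{equation*}
k_3(V)=-\beta+\frac{2\alpha V}{\beta+1}+O(\alpha^2).
\end{equation*}
Assumption~\ref{ass:LinearStability} at $(\oU_2,\oV_2)$ is then checked by a direct computation: since $g(U,V)=U-V$ one has $c_0=1$ and $d_0=-1<0$, while $\oU_2=1+O(\alpha)$ gives $a_0=f_u(\oU_2,\oV_2)=-1+O(\alpha)<0$ and $b_0=f_v(\oU_2,\oV_2)=O(\alpha)$; hence the matrix of partial derivatives at $(\oU_2,\oV_2)$ has trace close to $-2$ and determinant $-a_0-b_0$ close to $1$, so both its eigenvalues have strictly negative real parts.

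The main obstacle is verifying $f_u(k_3(\oV_2),\oV_2)<0$, because $k_3(\oV_2)$ sits close to the singular line $U=-\beta$ along which $f_v$ and the last term of $f_u$ blow up. Substituting the expansion of $k_3(\oV_2)$ into
\begin{equation*}
f_u(U,V)=1-2U-\frac{2\alpha\beta V}{(\beta+U)^2}
\end{equation*}
makes the denominator $(\beta+k_3(\oV_2))^2$ of order $\alpha^2$, so the singular term is of order $1/\alpha$ and dominates the bounded contribution $1+2\beta+O(\alpha)$; a leading-order bookkeeping yields
\begin{equation*}
f_u(k_3(\oV_2),\oV_2)=-\frac{\beta(\beta+1)^2}{2\alpha\oV_2}+O(1),
\end{equation*}
which is strictly negative for small $\alpha>0$. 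Together with the automatic inequality $g_v(k_3(\oV_2),\oV_2)=-1<0$, this completes the verification of~\eqref{eq:SecBranch}, and Theorem~\ref{thm:ApplicationSystemStabilityDiscontinuous2} then delivers exponential stability in $L^\infty(\Omega)$ of the constructed discontinuous solution.
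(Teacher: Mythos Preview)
Your proof is correct; it reaches the same conclusion as the paper but by a different route.

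The paper never computes $f_u$ directly. Instead it differentiates the relation $f\big(k(V),V\big)=0$ along a branch to obtain the identity $f_u\big(k(V),V\big)\,k'(V)+f_v\big(k(V),V\big)=0$, and then reads off the sign of $f_u$ from the signs of $f_v$ and $k'$: since $f_v(U,V)=\alpha(\beta-U)/(\beta+U)>0$ whenever $|U|<\beta$, and since both $k_2$ and $k_3$ are increasing near $\oV_2$ (with $k_2'(\oV_2)<1$, which also yields $a_0d_0-b_0c_0=-a_0\big(1-k_2'(\oV_2)\big)>0$), one gets $a_0<0$ and $f_u\big(k_3(\oV_2),\oV_2\big)<0$ without any expansion. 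Your approach instead fixes the asymptotics of $(\oU_2,\oV_2)$ and of $k_3$ as $\alpha\to 0^+$ and substitutes into the explicit formula for $f_u$. The paper's argument is shorter and neatly sidesteps the near-singularity at $U=-\beta$ that you flag as the ``main obstacle''; on the other hand, your computation is fully quantitative, does not appeal to a picture, and makes the dependence on $\alpha$ explicit (in particular it shows $f_u\big(k_3(\oV_2),\oV_2\big)\sim -\beta(\beta+1)^2/(2\alpha)$).
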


\begin{proof}	
	\rev{As in the proof of Theorem \ref{prop:OregEx},  we have an open subset $\V\subset \R$ satisfying $V_1, V_2, V_3 \in \V$ and three branches $k_1, k_2, k_3$ defined on $\V$, see  Fig. \ref{fig:Orego}}. Let $\UV$ be a discontinuous stationary solution constructed around points $(\oU_2,\oV_2)$ and $(k_3(\oV_2),\oV_2)$ via Theorem \ref{DisExBan}.
	
	First, we check Assumption~\ref{ass:LinearStability}. Since $|\oU_2|<\beta$ and \rev{since} $k_2(V)$ is increasing in the neighbourhood of $\oV_2$ (see Fig. \ref{OregonatorODE}) we have $a_0<0$ by \rev{equation}~\eqref{eq:DerIden}. In order to show that the matrix $\begin{spm}a_0 &b_0 \\ c_0 &d_0\end{spm}$ has both eigenvalues with negative real parts, it suffices to show that
	\begin{nalign}
		a_0 + d_0 < 0 \quad \text{and} \quad a_0d_0 - b_0c_0 >0. 
	\end{nalign}
	Since $c_0 = 1$ and $d_0 = -1$, first inequality is immediately satisfied. Moreover, since \rev{$k_2'\big(\oV_2)\big)<1$}, by identity \eqref{eq:DerIden}, we have
	\begin{nalign}
		\rev{a_0d_0 - b_0c_0 = -a_0\big(1-k_2'(\oV_2)\big)>0}.
	\end{nalign}
	
	Finally, $|k_3\big(\oV_2\big)|<\beta$ (see Fig. \ref{OregonatorODE}) and the function $k_3(V)$ is increasing in \rev{a} neighbourhood of $\oV_2$ which implies that $f_u\big(k_3(\oV_2),\oV_2\big)<0$.
	
	By Theorem \ref{thm:ApplicationSystemStabilityDiscontinuous2}, the discontinuous stationary solution $\UV$ is stable provided $\varepsilon>0$ and $|\Omega_2|>0$ are small enough.
\end{proof}

\begin{rem}
	On the other hand, all discontinuous stationary solutions constructed in Theorem~\ref{OregonatorODE} around the points $(\oU_1,\oV_1)$ and $(\oU_3,\oV_3)$ are unstable. Indeed, analogously as in the proof of Theorem \ref{prop:OregEx}, we analyse nullclines \eqref{eq:OregNullclines} presented in Fig. \ref{fig:Orego}. Following the calculations from equation \eqref{eq:DerIden} we obtain, that \rev{the number} $f_u\big(k(\oV),\oV)$ is positive if either 
	\begin{nalign}
		\label{eq:OregA0}
		|\oU|<\beta \quad \text{and} \quad k'(\oV)<0 \qquad \text{or} \qquad |\oU|>\beta \quad \text{and} \quad k'(\oV)>0.
	\end{nalign}
	Since the constant solutions given by explicit formulas satisfy $\oU_1=0$,  $\oU_3<-\beta$ and \rev{since} the function $k(V)$ is decreasing in \rev{a} neighbourhood of $\oU_1$ and increasing in a neighbourhood of $\oU_3$ (see Fig. \ref{fig:Orego}) we obtain that every solution constructed around points $(\oU_1, \oV_1)$ and $(\oU_3, \oV_3)$ is linearly unstable by Theorem \ref{thm:InstabilityAutocata}. 
\end{rem}

\subsection{Predator-prey model}
Finally, we discuss stationary solutions to the following model of predator-prey interactions which is a particular case of a system considered in \cite{MR579554}:
\begin{nalign}
	\label{PredatorPreyODE}
	u_t &= u \left(\frac{u^2}{u^3 + 1}  - v\right), & x\in\overline{\Omega}, && t>0 \\
	v_t &= \gamma \Delta_\nu v  + v (\alpha u -  v - \beta ) &  x\in \Omega, && t>0.
\end{nalign}
which $\alpha>0$ and $\beta>0$. Here, the couple $(\oU_2,\oV_2) = (0,0)$ is the constant stationary solution and the second constant stationary solution $(\oU_1,\oV_1)$ satisfies the system 
\begin{nalign}
	\label{eq:PredatorPreyNullclines}
	f(U,V) = \frac{U^2}{U^3 + 1}  - V = 0 \quad \text{and} \quad g(U,V) =  \alpha U -  V - \beta = 0. 
\end{nalign} 
\begin{figure}[h]
	\includegraphics[width=0.7\linewidth]{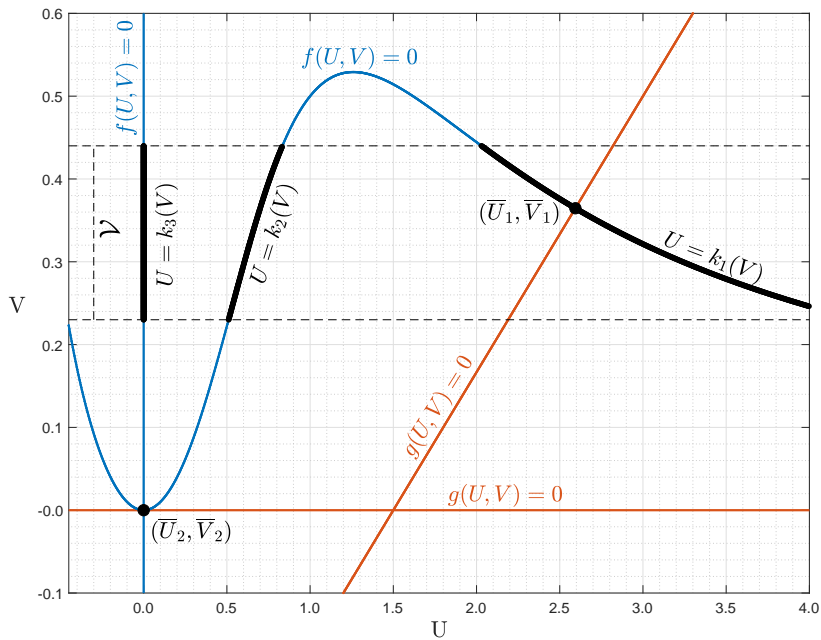}
	\caption{The nullclines \eqref{eq:PredatorPreyNullclines} for the predator-prey model with $\oU_1>U_m$.}
	\label{fig:PredatorPreyNullclines}
\end{figure}

\noindent
The nullclines defined by equations \eqref{eq:PredatorPreyNullclines} are sketched in Fig. \ref{fig:PredatorPreyNullclines}. Notice that the function ${U^2}/({U^3 + 1})$ attains a maximum for $U>0$ at some point which we denoted by $U_m$.

\begin{theorem}
	\label{thm:PredatorPrey}
	Assume that $\alpha >0$ and $\beta>0$ are such that the constant stationary solution \rev{$(\oU_1,\oV_1)$} satisfies \rev{$\oU_1\neq U_m$}. Then there exist a family of discontinuous stationary solutions to problem \eqref{PredatorPreyODE}. 
\end{theorem}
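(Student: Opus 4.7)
The strategy is to apply Theorem~\ref{DisExBan} with the non-trivial constant stationary solution $(\oU_1,\oV_1)$ of Fig.~\ref{fig:PredatorPreyNullclines}, so the proof reduces to checking its three hypotheses.

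First I would verify Assumption~\ref{ass:a0}. Differentiating $f(U,V) = U^2/(U^3+1) - V$ gives $f_u(U,V) = U(2 - U^3)/(U^3+1)^2$, whose only positive zero is $U_m = 2^{1/3}$. Under the hypothesis $\oU_1 \neq U_m$ one therefore has $a_0 = f_u(\oU_1,\oV_1) \neq 0$. Computing the remaining entries $b_0 = -1$, $c_0 = \alpha\oV_1$, and (using the second nullcline $\alpha\oU_1 = \oV_1 + \beta$) $d_0 = -\oV_1$, the ratio $a_0^{-1}\det\left(\begin{smallmatrix}a_0 & b_0\\ c_0 & d_0\end{smallmatrix}\right)$ is a single fixed real number; since the eigenvalues $\mu_k$ of $-\Delta_\nu$ form a discrete set, this number equals $\gamma\mu_k$ only for $\gamma$ in a discrete set of positive reals, which we simply exclude to secure Assumption~\ref{ass:StatSol}.

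The crux of the argument, which I view as the only step beyond routine verification, is Assumption~\ref{ass:DiscontinuousStationaryTwoBranches}: producing two distinct branches of the equation $f(U,V)=0$ defined on a common neighbourhood of $\oV_1$. Here I would exploit that the function $h(U) = U^2/(U^3+1)$ on $(0,\infty)$ rises from $0$ to a strict maximum at $U_m$ and then decreases back to $0$, so that for every $V \in (0, h(U_m))$ the equation $h(U) = V$ has exactly two positive solutions, one in $(0,U_m)$ and one in $(U_m,\infty)$. Since $\oU_1 \neq U_m$, strict monotonicity of $h$ on each side of $U_m$ gives $\oV_1 = h(\oU_1) < h(U_m)$, hence there exists a second positive root $\tilde U \neq \oU_1$ of $h(U) = \oV_1$, lying on the opposite side of $U_m$, with $h'(\tilde U) \neq 0$. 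The implicit function theorem applied at $(\oU_1,\oV_1)$ and at $(\tilde U,\oV_1)$ then produces two $C^2$-branches $k_1, k_2$ on a common open neighbourhood $\V$ of $\oV_1$ satisfying $k_1(\oV_1) = \oU_1 \neq \tilde U = k_2(\oV_1)$.

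With all three hypotheses verified, Theorem~\ref{DisExBan} applied to any open $\Omega_1 \subset \Omega$ with $|\Omega \setminus \overline{\Omega}_1|$ sufficiently small yields the required family of discontinuous stationary solutions to problem~\eqref{PredatorPreyODE}.
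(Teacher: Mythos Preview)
Your strategy coincides with the paper's: verify the hypotheses of Theorem~\ref{DisExBan} at $(\oU_1,\oV_1)$. The paper's own proof is a one-line reference back to the Oregonator argument (Theorem~\ref{prop:OregEx}), while you spell the verification out explicitly; the substance is the same.

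There is, however, a computational slip worth flagging. You treated $f(U,V)=U^2/(U^3+1)-V$, but in system~\eqref{PredatorPreyODE} the reaction term is $f(U,V)=U\bigl(U^2/(U^3+1)-V\bigr)$; the display~\eqref{eq:PredatorPreyNullclines} records only the nullclines after the common factors $U$ and $V$ have been divided out. With the correct $f$ one finds $a_0=\oU_1\,h'(\oU_1)$ (where $h(U)=U^2/(U^3+1)$) and $b_0=-\oU_1$, not your $h'(\oU_1)$ and $-1$. Fortunately none of this damages the argument: since $\oU_1>0$, the true $a_0$ vanishes precisely when your expression does, namely at $\oU_1=U_m$; Assumption~\ref{ass:StatSol} is still obtained by excluding a discrete set of diffusion coefficients regardless of the exact value of the determinant; and your two branches, obtained by inverting $h$ on either side of $U_m$, are genuine branches of the full equation $f=0$. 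The extra factor $U$ also yields a third branch $k(V)\equiv 0$, which the paper records (allowing one to pair $k_1$ with either of the other two, cf.~Remark~\ref{thm:3Branches}) and which is used in the subsequent stability theorem, but two branches already suffice for Theorem~\ref{DisExBan}.
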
 

\begin{proof}
	It is sufficient to apply Theorem \ref{DisExBan} in the same way as \rev{in the proof of} Theorem~\ref{prop:OregEx} with the stationary point \rev{$(\oU_1,\oV_1)$} with the branch \rev{$k_1$} and the branches either \rev{$k_2$} or $k_3$ (or the both, see Remark \ref{thm:3Branches}). 
\end{proof}

\begin{theorem}
	Assume that $\alpha >0$ and $\beta>0$ are such that the constant stationary solution \rev{$(\oU_1,\oV_1)$} satisfies \rev{$\oU_1 > U_m$}. Then there exist stable discontinuous stationary solutions to problem \eqref{PredatorPreyODE}. 
\end{theorem}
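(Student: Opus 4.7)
The plan is to proceed as in the proof of Theorem~\ref{thm:OregonatorStable}, but with a second branch obtained from the factored form of $f$. Writing
\[
f(U,V)=U\bigl(\phi(U)-V\bigr),\qquad \phi(U):=\frac{U^2}{U^3+1},
\]
we see that $f(U,V)=0$ admits the trivial branch $k_0(V)\equiv 0$ on any open $\V\subseteq\R$, alongside the nontrivial branch $k_1$ through $(\oU_1,\oV_1)$ coming from the portion $V=\phi(U)$ with $U>U_m$. Since $\oU_1>U_m>0$ we have $\oV_1=\phi(\oU_1)>0$ and also $\oV_1=\alpha\oU_1-\beta$ from $g(\oU_1,\oV_1)=0$. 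The pair $(k_1,k_0)$ satisfies Assumption~\ref{ass:DiscontinuousStationaryTwoBranches}, while Assumption~\ref{ass:a0} and Assumption~\ref{ass:StatSol} are arranged as in Theorem~\ref{thm:PredatorPrey}.

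Next I would verify Assumption~\ref{ass:LinearStability} at $(\oU_1,\oV_1)$. Differentiating $f$ and using $\phi(\oU_1)-\oV_1=0$ gives $a_0=\oU_1\,\phi'(\oU_1)$. A direct computation yields $\phi'(U)=U(2-U^3)/(U^3+1)^2$, so $\phi$ is strictly decreasing on $(U_m,\infty)=(2^{1/3},\infty)$ and $a_0<0$. The remaining entries are $b_0=-\oU_1<0$, $c_0=\alpha\oV_1>0$, and $d_0=g_v(\oU_1,\oV_1)=\alpha\oU_1-2\oV_1-\beta=-\oV_1<0$ (using $\oV_1=\alpha\oU_1-\beta$). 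Hence $a_0+d_0<0$ and
\[
a_0 d_0-b_0 c_0=\oU_1\oV_1\bigl(\alpha-\phi'(\oU_1)\bigr)>0,
\]
so both eigenvalues of the linearization have strictly negative real parts.

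To finish, I would check the second-branch conditions \eqref{eq:SecBranch} at $k_0(\oV_1)=0$: direct evaluation yields $f_u(0,\oV_1)=-\oV_1<0$ and $g_v(0,\oV_1)=-2\oV_1-\beta<0$. Applying Theorem~\ref{DisExBan} with the pair $(k_1,k_0)$ produces a family of discontinuous stationary solutions with $U\approx \oU_1$ on $\Omega_1$ and $U\approx 0$ on a small set $\Omega_2$; Theorem~\ref{thm:ApplicationSystemStabilityDiscontinuous2} then upgrades them to solutions that are exponentially stable in $\Li$ provided $\varepsilon>0$ and $|\Omega_2|>0$ are sufficiently small.

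The only delicate point is the choice of the second branch. Among the three branches of the curve $V=\phi(U)$, the product $U\phi'(U)$ is negative only for $U>U_m$, which is the branch already carrying $\oU_1$ and therefore unusable as a second branch; on $U\in(0,U_m)$ one has $\phi'>0$, while on $U\in(-1,0)$ one has $\phi'<0$ together with $U<0$, so in both remaining cases $U\phi'(U)>0$, and Theorem~\ref{thm:InstabilityAutocata} would force instability. This is precisely why the trivial branch $k_0\equiv 0$, produced by the factor $U$ in $f(U,V)$, must be used. The hypothesis $\oU_1>U_m$ is used twice: it secures $\phi'(\oU_1)<0$ in the linearization at $(\oU_1,\oV_1)$, and it guarantees $\oV_1=\phi(\oU_1)>0$, which in turn delivers $f_u(0,\oV_1)=-\oV_1<0$ on the trivial branch.
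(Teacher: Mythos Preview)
Your proof is correct and follows essentially the same route as the paper's: verify Assumption~\ref{ass:LinearStability} at $(\oU_1,\oV_1)$ via $a_0<0$, $b_0<0$, $c_0>0$, $d_0<0$, then check the second-branch conditions \eqref{eq:SecBranch} at $(0,\oV_1)$ and invoke Theorem~\ref{thm:ApplicationSystemStabilityDiscontinuous2}. The only cosmetic difference is that you compute $a_0=\oU_1\phi'(\oU_1)<0$ directly from the explicit formula for $\phi'$, whereas the paper obtains $a_0<0$ by observing that the branch $k_1$ is decreasing near $\oV_1$ (equivalent via identity~\eqref{eq:DerIden}); your final paragraph explaining why only the trivial branch $U\equiv 0$ can serve as the second branch is a useful addition not present in the paper.
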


\begin{proof}
	Following the reasoning \rev{from} Theorem \ref{thm:OregonatorStable} we obtain that the \rev{number $f_u(\oU_1,\oV_1)$} is negative because the function \rev{$k_1(V)$} is decreasing in a neighbourhood of \rev{$\oV_1$} (see Fig.~\ref{fig:PredatorPreyNullclines}). Moreover, since $b_0 <0$, $c_0>0$ and $d_0 <0$ (with the notation from \eqref{ass:Matrix}) we obtain that 
	\begin{nalign}
		a_0 + d_0 <0 \quad \text{and} \quad a_0d_0 - b_0c_d>0
	\end{nalign} 
	and Assumption \ref{ass:LinearStability} holds true. By a direct calculation, we obtain that 
	\begin{nalign}
		\rev{f_u(0,\oV_1)  = -\oV_1 <0 \quad \text{and}\quad g_v(0,\oV_1) = -2\oV_1 - \beta <0}.
	\end{nalign}
	\rev{Thus,} the solution is stable by Theorem \ref{thm:ApplicationSystemStabilityDiscontinuous2}.
\end{proof}

\subsection{Numerical illustrations of discontinuous stationary solutions}

\begin{figure}[b]
	\includegraphics[width=1.0\linewidth]{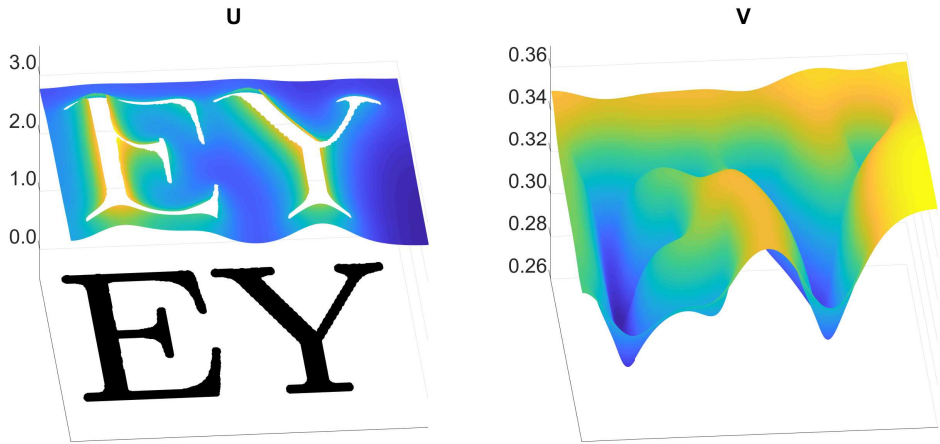} 
	\caption{The solution $u$, $v$ to problem \eqref{PredatorPreyODE} for sufficiently large $t>0$ and with the $EY$-shape initial condition satisfying equations~\eqref{ExampleIni}. The solution $u$ is equal to $0$ on the $EY$-shape set $\Omega_2$ (black) and stays close to $\oU_1$ on a set $\Omega_1$ (blue-green-yellow). The solution $v$ is an $EY$-perturbation of the constant solution $\oV_2$.}
	\label{fig:EY}
\end{figure}

We conclude this work by numerical simulations of solutions to problem \eqref{PredatorPreyODE} with $\alpha$ and $\beta$ as in Fig.~\ref{fig:Orego}. To obtain Fig.~\ref{fig:EY}, we used the explicit finite difference Euler method and we choose the initial conditions $(u_0, v_0)$ in the following way.
First, we decompose the domain $\Omega=[0,1]^2 = {\Omega_1 \cup \Omega_2}$ with the $EY$-shape set $\Omega_2$ and $|\Omega_2|>0$ sufficiently small. Next, we set  
\begin{nalign}
	\label{ExampleIni}
	u_0 =\overline{ U}_1 \text{ and } v_0=\overline{ V}_1 \text{ on } \Omega_1 \quad \text{as well as} \quad u_0 =0 \text{ and } v_0=\oV_1  \text{ on } \Omega_2.
\end{nalign} 
The graph of the corresponding solution in Fig.~\ref{fig:EY} was obtained for large values of $t>0$ and, by \rev{the} stability results from this work, \rev{this is a good approximation of discontinuous stationary solution}. Here, we observe a graphical \rev{illustration} of inequalities \eqref{DisVar}, namely, $U$ stays close to $\overline{ U}_1 = k_1(\overline{ V}_1)$ on the set $\Omega_1$ and close to $k_3(\overline{ V}_1) = 0$ on the set $\Omega_2$.

\appendix 




\end{document}